\newcommand{\CC}{\mathbb{C}}
\newtheorem {Theorem}[equation]         {Theorem}
\newtheorem {Lemma}[equation]           {Lemma}
\newtheorem {Claim*}                    {Claim}
\newtheorem {Corollary} [equation]      {Corollary}
\newtheorem {Proposition}  [equation]   {Proposition}
\theoremstyle{definition}
\theoremstyle{remark}
\newtheorem{Remark}[equation]{Remark}
\newtheorem*{Remark*}{Remark}
\title{The geometric structure of symplectic contraction}
\author{Jeremy Lane}
\address{Dept.\ of Mathematics, Universit\'e de Gen\`{e}ve, 2-4 rue du Li\`{e}vre, Case postale 64
1211 Gen\`{e}ve 4, Switzerland}
\email{jeremy.lane@unige.ch}
\date{\today}
\keywords{hamiltonian group actions, symplectic contraction, horospherical contraction, gelfand-zeitlin}
\begin{document}
\maketitle

\begin{abstract}
	We show that the symplectic contraction map of Hilgert-Manon-Martens \cite{hmm} -- a symplectic version of Popov's horospherical contraction -- is simply the quotient of a Hamiltonian manifold $M$ by a ``stratified null foliation'' that is determined by the group action and moment map. We also show that the quotient differential structure on the symplectic contraction of $M$ supports a Poisson bracket. We end by proving a very general description of the topology of fibers of Gelfand-Zeitlin systems on multiplicity free Hamiltonian $U(n)$ and $SO(n)$ manifolds.
\end{abstract}

\section{Introduction}

Degenerations and their gradient-Hamiltonian flows are a major theme in recent studies of  interactions between algebraic geometry, representation theory, and symplectic geometry. Although it can be difficult to precisely describe the gradient-Hamiltonian flow of a given degeneration -- for the simple reason that the defining differential equation can be quite complicated -- an improved understanding the geometry of this flow is desirable, since it often leads to interesting new results lying at the interface between algebraic and symplectic geometry (cf. \cite{nnu,harada-kaveh,hp,flp,kaveh}).

Recent work by Hilgert-Manon-Martens (HMM) provides an algebraic formula for the time-1 flow of Popov's degeneration of a semi-projective variety equipped with an action by a connected complex reductive group $G$ \cite{pop}, to its horospherical contraction \cite{hmm}. To this end, HMM define more generally, the \emph{symplectic contraction}, $M^{sc}$, of any Hamiltonian $K$-manifold $(M,\omega,\mu)$ ($K$ compact, connected) and the \emph{symplectic contraction map}, $\Phi_M\colon M \to M^{sc}$. HMM prove that if $M$ is semi-projective and $G = K^{\CC}$, then $\Phi_M$ coincides with the time-1 flow of the gradient-Hamiltonian flow of Popov's degeneration \cite[Corollary 5.12]{hmm}.
 
Although a formula for $\Phi_M$ -- and thus, when everything is sufficiently algebraic, a formula for the time-1 flow of horospherical degeneration -- presents significant progress, both the definition of $M^{sc}$ as a diagonal reduction of a product of symplectic imploded spaces, and the formula for $\Phi_M$, are somewhat opaque from the perspective of symplectic geometry.

In this note we study the geometry of symplectic contraction in more detail. We observe that: 
\begin{enumerate}[i)]
	\item there is a naturally defined decomposition of any Hamiltonian $K$ manifold $M$ into coisotropic submanifolds (whose definition only depends on the action of $K$ and the moment map), 
	\item the quotient of $M$ by the null foliation of these coisotropic submanifolds is isomorphic to $M^{sc}$ (i.e. there is a stratification preserving $K\times T$ equivariant homeomorphism of the two spaces whose restriction to the symplectic strata is a symplectomorphism), and 
	\item with this identification, $\Phi_M\colon M \to M^{sc}$ is simply the quotient map for the stratified null foliation of $M$.
\end{enumerate}

In addition to demonstrating symplectic contraction as a natural geometric quotient of a Hamiltonian manifold, in many ways analogous to Marsden and Weinstein's symplectic reduction and Guillemin-Jeffreys-Sjamaar's symplectic implosion, this perspective has several immediate consequences. First, it is obvious that the restriction of the map $\Phi_M$ to the open dense piece of $M$ is a symplectomorphism onto its image: the null foliation of a symplectic manifold is trivial! Second, from this perspective one observes that $M^{sc}$ has a naturally defined Poisson algebra of smooth functions, which endows it with the structure of a symplectic stratified space in the sense of \cite{sl}. Finally, from this perspective we see that the symplectic contraction map (and in the algebraic case, the time-1 gradient-Hamiltonian flow of horospherical degeneration) is not just a continuous map that extends a smooth map on an open dense set: it is smooth in a stratified sense and Poisson in a differential sense.

In Section 2 and 3 we describe the results outlined above. In Section 4 we discuss this geometric perspective in relation to branching contraction (iterated symplectic contraction) and Gelfand-Zeitlin systems. In particular, we prove that the symplectic pieces of the branching contraction corresponding to a Gelfand-Zeitlin system (on an arbitrary multiplicity free $U(n)$ or $SO(n)$ manifold) are all toric manifolds (Theorem \ref{pieces-are-toric}). It follows from this fact that the fibers of Gelfand-Zeitlin systems can be described geometrically as iterated bundles of coisotropic homogeneous spaces over isotropic tori.  This provides a more general version of results obtained for Gelfand-Zeitlin systems on $U(n)$ coadjoint orbits by \cite{cko}, which only came to the attention of the author of this paper after completing this paper. 

\textbf{Acknowledgements:} The author would like to thank Yael Karshon, Megumi Harada, Kuimars Kaveh, and Chris Manon for  discussions and feedback. This work was supported by a NSERC PGSD award and NCCR SwissMAP of the Swiss National Science Foundation.

\section{The ``stratified null foliation'' of a Hamiltonian $K$-manifold}\label{s:2}

Let $K$ be a compact, connected Lie group with Lie algebra $\mathfrak{k}$. In this section we show that any Hamiltonian $K$-manifold $(M,\omega,\mu)$ has a ``stratified null foliation'' determined by the action of $K$ and the moment map $\mu$ and describe the quotient of $M$ by this foliation. The stratified null foliation of $M$ is closely related to its symplectic implosion and, as a result, this section is inspired by many ideas from \cite{gsj}.

If one fixes a maximal torus $T$ and a positive Weyl chamber $\Delta \subseteq \mathfrak{t}^*$, there is a maximal slice for the coadjoint action of $K$ at each stratum\footnote{As a polyhedral cone, the positive Weyl chamber $\mathfrak{t}_+^* = \Delta$ has a natural stratification by relative interiors of faces.} $\sigma \subseteq \Delta$ given by $\mathfrak{S}_{\sigma} : = K_{\sigma}\cdot \text{star}(\sigma) \subseteq \mathfrak{k}_{\sigma}^*$, where $K_{\sigma}$ is the stabilizer subgroup of points in $\sigma$ for the coadjoint action of $K$ (cf. \cite{gsj}).  For a Hamiltonian $K$-manifold $(M,\omega,\mu)$, let $M_{\sigma} := \mu^{-1}(\mathfrak{S}_{\sigma})$ denote the symplectic cross-section of $M$ at $\sigma$.  We recall the symplectic cross-section theorem, as stated in \cite[Theorem 2.5]{gsj}, which we will use below.

\begin{Theorem}[The symplectic cross-section theorem]\label{th:symplectic slice theorem} Let $(M,\omega,\mu)$ be a Hamiltonian $K$-manifold. Then,
\begin{enumerate}
	\item $M_{\sigma}$ is a $K_{\sigma}$-invariant symplectic submanifold of $M$ and the restriction of $\mu$ to $M_{\sigma}$ is a moment map for the $K_{\sigma}$ action.
	\item The map $K\times M_{\sigma} \to M$ given by $(k,m) \mapsto k\cdot m$ induces a symplectomorphism $K\times_{K_{\sigma}}M_{\sigma} \cong KM_{\sigma}$ (with respect to the symplectic structure on $K\times_{K_{\sigma}}M_{\sigma}$ that will be described below) onto its image, which is open and dense in $M$.
	\item If $\sigma_{prin}$ is the principal stratum\footnote{Recall that the principal stratum of $\Delta$ corresponding to a Hamiltonian $K$-manifold $M$ is the unique stratum such that $\mu(M)\cap \sigma$ is dense in $\mu(M) \cap \Delta$.} of $\Delta$ corresponding to $(M,\omega,\mu)$, then $ [K_{\sigma_{\text{\emph{prin}}}},K_{\sigma_{\text{\emph{prin}}}}]$ acts trivially on $M_{\sigma_{\text{\emph{prin}}}}$.
\end{enumerate}
\end{Theorem}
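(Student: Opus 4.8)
This is the symplectic cross--section (normal--form) theorem of Guillemin--Sternberg in the packaging of \cite{gsj}, and the plan is to isolate a purely Lie--theoretic \emph{slice lemma} about the sets $\mathfrak{S}_\sigma \subseteq \mathfrak{k}^*$, after which everything is symplectic bookkeeping driven by the moment--map identity $\langle d\mu_m(v),X\rangle = \omega_m(X_M(m),v)$ (with $X_M$ the fundamental vector field of $X\in\mathfrak{k}$). Fix a $K$--invariant inner product on $\mathfrak{k}$ and, for a stratum $\sigma$, write $\mathfrak{k} = \mathfrak{k}_\sigma \oplus \mathfrak{m}$ ($K_\sigma$--invariantly, so $[\mathfrak{k}_\sigma,\mathfrak{m}]\subseteq\mathfrak{m}$) and $\mathfrak{k}^* = \mathfrak{k}_\sigma^* \oplus \mathfrak{m}^*$ with $\mathfrak{k}_\sigma^* = \mathfrak{m}^\circ$ (the annihilator of $\mathfrak{m}$ in $\mathfrak{k}^*$) and $\mathfrak{m}^* = \mathfrak{k}_\sigma^\circ$. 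The slice lemma I would quote (or re--derive from the combinatorics of the faces of $\Delta$, the one genuinely non--formal ingredient) is: $\mathfrak{S}_\sigma = K_\sigma\cdot\mathrm{star}(\sigma)$ is a $K_\sigma$--invariant locally closed submanifold of $\mathfrak{k}^*$ with $T_\eta\mathfrak{S}_\sigma = \mathfrak{k}_\sigma^*$ for all $\eta\in\mathfrak{S}_\sigma$; every such $\eta$ satisfies $K_\eta\subseteq K_\sigma$; the saturation $K\mathfrak{S}_\sigma = q^{-1}(\mathrm{star}(\sigma))$ is open in $\mathfrak{k}^*$ (here $q\colon\mathfrak{k}^*\to\Delta$ is the orbit map and $\mathrm{star}(\sigma)$ is open in $\Delta$); and $K\mathfrak{S}_\sigma$ meets each coadjoint orbit in exactly one $K_\sigma$--orbit.

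For (1): at $m\in M_\sigma$ with $\eta=\mu(m)$, equivariance and the slice lemma give $\mathfrak{k}_m\subseteq\mathfrak{k}_\eta\subseteq\mathfrak{k}_\sigma$, so $\mathfrak{k}_m\cap\mathfrak{m}=0$; since $\mathrm{im}(d\mu_m)=(\mathfrak{k}_m)^\circ$, this is precisely transversality $\mu\pitchfork\mathfrak{S}_\sigma$ at $m$, making $M_\sigma$ a submanifold with $T_mM_\sigma=(d\mu_m)^{-1}(T_\eta\mathfrak{S}_\sigma)=(d\mu_m)^{-1}(\mathfrak{m}^\circ)$, which the moment--map identity rewrites as $(\mathfrak{m}\cdot m)^{\omega_m}$. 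I would then show $\mathfrak{m}\cdot m\cap T_mM_\sigma=0$: if $X\in\mathfrak{m}$ has $X_M(m)\in T_mM_\sigma$ then $\mathrm{ad}^*_X\eta=d\mu_m(X_M(m))\in\mathfrak{m}^\circ$, while $[\mathfrak{k}_\sigma,\mathfrak{m}]\subseteq\mathfrak{m}$ and $\eta\in\mathfrak{m}^\circ$ force $\mathrm{ad}^*_X\eta\in\mathfrak{k}_\sigma^\circ=\mathfrak{m}^*$, hence $\mathrm{ad}^*_X\eta\in\mathfrak{m}^\circ\cap\mathfrak{m}^*=0$, i.e.\ $X\in\mathfrak{k}_\eta\cap\mathfrak{m}=0$. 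Thus $\mathfrak{m}\cdot m$ is symplectic and $T_mM=\mathfrak{m}\cdot m\oplus T_mM_\sigma$ is a symplectic direct sum, so $T_mM_\sigma$ is symplectic. $K_\sigma$--invariance of $M_\sigma$ is immediate, and since $\mu|_{M_\sigma}$ already lands in $\mathfrak{S}_\sigma\subseteq\mathfrak{k}_\sigma^*$, restricting the moment--map identity to the vector fields $X_M$ with $X\in\mathfrak{k}_\sigma$ (which are tangent to $M_\sigma$) exhibits $\mu|_{M_\sigma}$ as a moment map for the $K_\sigma$--action.

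For (2): the map $(k,m)\mapsto k\cdot m$ descends to $K\times_{K_\sigma}M_\sigma$, with image $KM_\sigma=\mu^{-1}(K\mathfrak{S}_\sigma)$, open by the slice lemma; it is injective since $k_1m_1=k_2m_2$ with $m_i\in M_\sigma$ forces $k_2^{-1}k_1$ to carry $\mu(m_1)$ to $\mu(m_2)$ within $\mathfrak{S}_\sigma$, hence $k_2^{-1}k_1\in K_\sigma$ (last clause of the slice lemma), i.e.\ $[k_1,m_1]=[k_2,m_2]$. It is a local diffeomorphism by a dimension count: its differential at $[e,m]$, namely $[(X,v)]\mapsto X_M(m)+v$, surjects onto $\mathfrak{m}\cdot m+T_mM_\sigma=T_mM$ (as $\mathfrak{k}_\sigma\cdot m\subseteq T_mM_\sigma$), and $\dim(K\times_{K_\sigma}M_\sigma)=\dim\mathfrak{m}+\dim M_\sigma=\dim M$ using $\mathfrak{k}_m\cap\mathfrak{m}=0$; $K$--equivariance spreads this over the whole domain, so together with injectivity we get a diffeomorphism onto the open set $KM_\sigma$. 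For the symplectic claim one equips $K\times_{K_\sigma}M_\sigma$ with the Guillemin--Sternberg coupling form (to be recalled below): it is $K$--invariant, restricts to $\omega|_{M_\sigma}$ along $M_\sigma$, and has $[(k,m)]\mapsto\mathrm{Ad}^*_k\mu(m)$ as moment map --- three properties that determine it uniquely and are shared by the pullback of $\omega$ under the diffeomorphism, which is therefore a symplectomorphism. Density of $KM_\sigma$ in $M$ (in the relevant case $\sigma=\sigma_{\mathrm{prin}}$) is the only point needing more than formal arguments and follows from the local normal form theorem, which exhibits $KM_{\sigma_{\mathrm{prin}}}$ as an open dense subset of $M$.

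For (3): for any face $\tau$, every $\xi\in\mathrm{relint}(\tau)$ has $K_\xi=K_\tau$ and so is $K_\tau$--fixed, giving $\mathrm{relint}(\tau)\subseteq(\mathfrak{k}_\tau^*)^{K_\tau}=[\mathfrak{k}_\tau,\mathfrak{k}_\tau]^\circ$ and hence $\overline{\tau}\subseteq[\mathfrak{k}_\tau,\mathfrak{k}_\tau]^\circ$. Specializing to $\sigma_{\mathrm{prin}}$ and using the density that defines it, $\mu(M)\cap\Delta\subseteq\overline{\mu(M)\cap\mathrm{relint}(\sigma_{\mathrm{prin}})}\subseteq\overline{\sigma_{\mathrm{prin}}}\subseteq[\mathfrak{k}_{\sigma_{\mathrm{prin}}},\mathfrak{k}_{\sigma_{\mathrm{prin}}}]^\circ$, and since $K_{\sigma_{\mathrm{prin}}}$ fixes $[\mathfrak{k}_{\sigma_{\mathrm{prin}}},\mathfrak{k}_{\sigma_{\mathrm{prin}}}]^\circ$ pointwise, $\mu(M_{\sigma_{\mathrm{prin}}})\subseteq K_{\sigma_{\mathrm{prin}}}\cdot(\mu(M)\cap\mathrm{star}(\sigma_{\mathrm{prin}}))\subseteq[\mathfrak{k}_{\sigma_{\mathrm{prin}}},\mathfrak{k}_{\sigma_{\mathrm{prin}}}]^\circ$. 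Hence $\langle\mu,X\rangle\equiv 0$ on $M_{\sigma_{\mathrm{prin}}}$ for all $X\in[\mathfrak{k}_{\sigma_{\mathrm{prin}}},\mathfrak{k}_{\sigma_{\mathrm{prin}}}]$, so by the moment--map identity of (1) and nondegeneracy of $\omega|_{M_{\sigma_{\mathrm{prin}}}}$ the field $X_{M_{\sigma_{\mathrm{prin}}}}$ vanishes, and connectedness of $[K_{\sigma_{\mathrm{prin}}},K_{\sigma_{\mathrm{prin}}}]$ upgrades this to triviality of the action. The main obstacles, as flagged, are the slice lemma (where the Weyl--chamber face combinatorics really enter) and the density statement in (2); everything downstream is formal symplectic linear algebra.
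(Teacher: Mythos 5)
The paper offers no proof of this statement: it is imported verbatim from Guillemin--Jeffrey--Sjamaar (\cite[Theorem 2.5]{gsj}), so there is nothing internal to compare against. Your reconstruction follows the standard Guillemin--Sternberg/GJS line --- transversality of $\mu$ to the slice $\mathfrak{S}_\sigma$ via $\mathfrak{k}_m\subseteq\mathfrak{k}_\sigma$, the symplectic splitting $T_mM=\mathfrak{m}\cdot m\oplus(\mathfrak{m}\cdot m)^{\omega}$, the slice property of $K_\sigma\cdot\mathrm{star}(\sigma)$ for injectivity, and the uniqueness characterization of the invariant form on $K\times_{K_\sigma}M_\sigma$ by its moment map and its restriction to the fiber --- and I find it correct, with the two inputs you explicitly flag (the Lie-theoretic slice lemma and density via the local normal form) being exactly the non-formal ingredients one must import. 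One point worth making explicit: as written in the paper, part (2) asserts density of $KM_\sigma$ for every stratum $\sigma$, which is false in general (for non-principal $\sigma$ the flow-out can even be empty); you correctly restrict the density claim to $\sigma=\sigma_{\mathrm{prin}}$, which is how \cite{gsj} states it and the only case the paper uses. Your argument for (3), reducing to $\overline{\sigma_{\mathrm{prin}}}\subseteq[\mathfrak{k}_{\sigma_{\mathrm{prin}}},\mathfrak{k}_{\sigma_{\mathrm{prin}}}]^{\circ}$ and then killing the Hamiltonian vector fields by nondegeneracy of $\omega|_{M_{\sigma_{\mathrm{prin}}}}$ and connectedness of the commutator subgroup, is also the standard one and is sound.
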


By part (1) of the symplectic cross-section theorem, the action of $K_{\sigma}' = [K_{\sigma},K_{\sigma}]$ on $M_{\sigma}$ is Hamiltonian, generated by $p \circ \mu$ where $p\colon \mathfrak{k}_{\sigma}^* \to (\mathfrak{k}_{\sigma}')^*$ is the dual projection map. Following \cite{gsj}, observe that the zero level set of this moment map 
	$$(p\circ \mu)^{-1}(0) = \mu^{-1}(\sigma).$$
	It follows by \cite[Theorem 2.1]{sl}, that for every closed subgroup $H\leq K_{\sigma}'$ the intersection of $(p\circ \mu)^{-1}(0)$ with the orbit-type stratum of $H$ for the Hamiltonian action of $K_{\sigma}'$ on $M_{\sigma}$ is a coisotropic submanifold,
$$ (p\circ\mu|_{M_{\sigma}})^{-1}(0)\cap M_{\sigma,(H)} = \mu^{-1}(\sigma) \cap M_{\sigma,(H)} \subseteq M_{\sigma}$$
and, moreover, the leaves of the null foliation of this coisotropic submanifold equal the orbits of the action of $K_{\sigma}'$. We denote this coisotropic submanifold $Q_{\sigma,(H)}$ (note that $Q_{\sigma,(H)}$ may have multiple connected components of varying dimension). 

Define 
\begin{equation}\label{definition-of-W}
	W_{\sigma,(H)} := K\cdot Q_{\sigma,(H)}.
\end{equation}
Note that 
	$$W_{\sigma,(H)} = \left\{ m \in \mu^{-1}(\Sigma_{\sigma}) \colon\, \text{Stab}_{K_{\mu(m)}'}(m) \in (H) \right\}$$
where $(H)$ denotes the conjugacy class of $H$ in $K$.  By part (2) of the symplectic cross-section theorem (cf. \cite[Theorem 41.1]{gs})
$$W_{\sigma,(H)} \cong K \times_{K_{\sigma}}Q_{\sigma,(H)} \subseteq K \times_{K_{\sigma}}M_{\sigma} \cong (K \times \mathfrak{S}_{\sigma} \times M_{\sigma})\sslash_0 K_{\sigma}$$
where the space on the right is equipped with the symplectic form $\Omega|_{K \times \mathfrak{S}_{\sigma}} + \omega|_{M_{\sigma}}$ and the reduction by $K_{\sigma}$ is diagonal, generated by the moment map $\mu_{\mathcal{R}}+\mu|_{M_{\sigma}}$ (here $\Omega$ is the canonical symplectic form on $T^*K \cong K\times \mathfrak{k}^*$ and $\mu_{\mathcal{R}}(k,\lambda) = \lambda$ is the moment map for the cotangent lift of the right action of $K$ on itself).

\begin{Proposition}\label{prop:2}
	Each $W_{\sigma,(H)}$ is a coisotropic submanifold of $M$.  For every $m\in W_{\sigma,(H)}$, the leaf of the null foliation of $W_{\sigma,(H)}$ through $m$ equals the orbit $K_{\mu(m)}'\cdot m$.
\end{Proposition}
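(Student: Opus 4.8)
The plan is to localize the question and reduce it to a single tangent-space computation carried out in the local model furnished by the symplectic cross-section theorem. Being coisotropic is a pointwise condition, $W_{\sigma,(H)}\subseteq KM_\sigma$, and $KM_\sigma$ is open in $M$ with $KM_\sigma\cong K\times_{K_\sigma}M_\sigma$ symplectically by Theorem~\ref{th:symplectic slice theorem}(2); so it suffices to verify both assertions at an arbitrary point of $W_{\sigma,(H)}$, working inside $K\times_{K_\sigma}M_\sigma$. Moreover, since $\omega$ is $K$-invariant, $\mu$ is $K$-equivariant, and $W_{\sigma,(H)}=K\cdot Q_{\sigma,(H)}$ is $K$-invariant, it is enough to treat a point $m_0\in Q_{\sigma,(H)}\subseteq M_\sigma$; then $\mu(m_0)\in\sigma$, so $K_{\mu(m_0)}=K_\sigma$ and $K_{\mu(m_0)}'=K_\sigma'$ (note $K_\sigma$ is connected, being the centralizer of a torus element in a connected compact group, so $K_\sigma'$ is connected). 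The general case $m=k\cdot m_0$ then follows by pushing the conclusion forward by $k$, which conjugates $K_\sigma'$ to $K_{\mu(m)}'$.

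At such an $m_0$ I would compute $(T_{m_0}W_{\sigma,(H)})^\omega$ directly. The action map $(k,m)\mapsto k\cdot m$ gives a surjective submersion $K\times Q_{\sigma,(H)}\to W_{\sigma,(H)}$ (it is the quotient by the free diagonal $K_\sigma$-action followed by the identification $W_{\sigma,(H)}\cong K\times_{K_\sigma}Q_{\sigma,(H)}$), so $T_{m_0}W_{\sigma,(H)}=\mathfrak{k}\cdot m_0+T_{m_0}Q_{\sigma,(H)}$. Taking symplectic orthogonals and using the standard identity $(\mathfrak{k}\cdot m_0)^\omega=\ker d_{m_0}\mu$,
$$(T_{m_0}W_{\sigma,(H)})^\omega=(\mathfrak{k}\cdot m_0)^\omega\cap(T_{m_0}Q_{\sigma,(H)})^\omega=\ker d_{m_0}\mu\,\cap\,(T_{m_0}Q_{\sigma,(H)})^\omega .$$
Fix a $K$-invariant inner product on $\mathfrak{k}$ and the resulting $K_\sigma$-invariant splitting $\mathfrak{k}=\mathfrak{k}_\sigma\oplus\mathfrak{m}$. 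From the cross-section structure, $M_\sigma$ is a symplectic submanifold with $T_{m_0}M=T_{m_0}M_\sigma\oplus(T_{m_0}M_\sigma)^\omega$ and $(T_{m_0}M_\sigma)^\omega=\mathfrak{m}\cdot m_0$; since $T_{m_0}Q_{\sigma,(H)}\subseteq T_{m_0}M_\sigma$ and, by hypothesis, the null leaves of $Q_{\sigma,(H)}$ inside $M_\sigma$ are the $K_\sigma'$-orbits, the orthogonal $(T_{m_0}Q_{\sigma,(H)})^\omega$ is the sum of its $\omega|_{M_\sigma}$-orthogonal complement $\mathfrak{k}_\sigma'\cdot m_0$ in $M_\sigma$ with $(T_{m_0}M_\sigma)^\omega=\mathfrak{m}\cdot m_0$, i.e. $(T_{m_0}Q_{\sigma,(H)})^\omega=\mathfrak{k}_\sigma'\cdot m_0\oplus\mathfrak{m}\cdot m_0$. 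Finally, for $X\in\mathfrak{k}_\sigma'\oplus\mathfrak{m}$ one has $X\cdot m_0\in\ker d_{m_0}\mu$ iff $X$ lies in the coadjoint stabilizer $\mathfrak{k}_{\mu(m_0)}=\mathfrak{k}_\sigma$, and since $\mathfrak{k}_\sigma'\subseteq\mathfrak{k}_\sigma$ while $\mathfrak{m}\cap\mathfrak{k}_\sigma=0$ this forces the $\mathfrak{m}$-component of $X$ to vanish; hence
$$(T_{m_0}W_{\sigma,(H)})^\omega=\mathfrak{k}_\sigma'\cdot m_0=T_{m_0}(K_{\mu(m_0)}'\cdot m_0)\subseteq\mathfrak{k}\cdot m_0\subseteq T_{m_0}W_{\sigma,(H)},$$
which simultaneously establishes coisotropy at $m_0$ and identifies the null space with the tangent space to the orbit $K_{\mu(m_0)}'\cdot m_0$.

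To pass from this pointwise statement to the claim about leaves, I would argue as follows. The null distribution of a coisotropic submanifold is involutive ($d\omega=0$ and Cartan's formula), and on each connected component of $W_{\sigma,(H)}$ it has constant rank — there equal to $\dim M-\dim W_{\sigma,(H)}$ — so it integrates to a genuine foliation. By the previous paragraph, at each $m$ this null distribution equals the tangent space $T_m(K_{\mu(m)}'\cdot m)$. Since $K_{\mu(m)}'$ is compact and connected, the orbit $K_{\mu(m)}'\cdot m$ is a compact (hence closed), connected integral manifold of the null distribution of the correct dimension, and therefore it is exactly the leaf through $m$. This proves the proposition.

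The step I expect to be the main obstacle is the second use of the cross-section structure, namely the orthogonal decomposition $T_{m_0}M=T_{m_0}M_\sigma\oplus\mathfrak{m}\cdot m_0$ together with $(T_{m_0}M_\sigma)^\omega=\mathfrak{m}\cdot m_0$: this needs more than the bare fact that $M_\sigma$ is symplectic, and is obtained either from the explicit symplectic form on $K\times_{K_\sigma}M_\sigma\cong(K\times\mathfrak{S}_\sigma\times M_\sigma)\sslash_0 K_\sigma$ of Theorem~\ref{th:symplectic slice theorem}(2), or equivalently by the annihilator bookkeeping for the slice $\mathfrak{S}_\sigma\subseteq\mathfrak{k}_\sigma^*$ — the two key points being that $d_{m_0}\mu(T_{m_0}M_\sigma)\subseteq T_{\mu(m_0)}\mathfrak{S}_\sigma$ lies in the annihilator of $\mathfrak{m}$ in $\mathfrak{k}^*$, and that $\mathfrak{m}\cdot m_0\cap T_{m_0}M_\sigma=0$, the latter again via that computation together with $\mathfrak{m}\cap\mathfrak{k}_\sigma=0$. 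A further, purely organizational nuisance is tracking connected components of $W_{\sigma,(H)}$ of possibly varying dimension when upgrading ``constant-rank involutive distribution'' to ``foliation'', but this is bookkeeping rather than substance.
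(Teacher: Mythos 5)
Your proof is correct and follows essentially the same route as the paper's: reduce by $K$-equivariance to a point $q\in Q_{\sigma,(H)}$, write $T_qW_{\sigma,(H)}=\mathfrak{k}\cdot q+T_qQ_{\sigma,(H)}$, and compute the symplectic orthogonal using $(\mathfrak{k}\cdot q)^{\omega}=\ker d_q\mu$ together with the Sjamaar--Lerman identification of the null directions of $Q_{\sigma,(H)}$ in $M_\sigma$ with the $K_\sigma'$-orbit directions. Your version merely makes explicit two points the paper leaves terse -- the splitting $T_qM=T_qM_\sigma\oplus\mathfrak{m}\cdot q$ coming from the cross-section structure, and the integration argument upgrading the pointwise identity $(T_mW_{\sigma,(H)})^{\omega}=T_m(K_{\mu(m)}'\cdot m)$ to the statement about leaves -- both of which are handled correctly.
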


\begin{proof}
	We first show that for $q\in Q_{\sigma,(H)}$, the leaf of the null foliation of $W_{\sigma,(H)}$ through $q$ equals the orbit $K_{\sigma}'\cdot q$.
	
	Every element of $TqW_{\sigma,(H)}$ can be represented by $\underline{Y}+X$ for $Y \in \mathfrak{k}$ and $X\in TQ_{\sigma,(H)}.$ Every element of $TqM_{\sigma}$ can be represented by $\underline{Y'}+Z$ for $Y' \in \mathfrak{k}$ and $Z\in TM_{\sigma}.$
	
	If for all $Y\in \mathfrak{k}$,
	$$0 = \omega_q(\underline{Y},\underline{Y'}+Z) = \langle d\mu_q(\underline{Y'}+Z),Y\rangle$$
	then it must be true that $\underline{Y'}+Z \in \ker(d\mu_q)\subseteq T_qM_{\sigma}$, in which case $\underline{Y'}+Z$ can be represented by $Z\in T_qM_{\sigma}$.
	
	If for all $X\in T_qQ_{\sigma,(H)}$, 
	$$0 = \omega_q(X,Z),$$
	then it must be the case that $Z\in (TqQ_{\sigma,(H)})^{\omega}$, which by \cite[Theorem 2.1]{sl} equals $T_q(K_{\sigma}'\cdot q)$.
	
	For $m = k\cdot q$, the result follows since $K$ acts by symplectomorphisms: the leaf of the null foliation of $W_{\sigma,(H)}$ through $k\cdot q$ is the set 
	$$k\cdot(K_{\sigma}'\cdot q) = (kK_{\sigma}'k^{-1})\cdot (k\cdot q) = K_{\mu(m)}'\cdot m.$$	
\end{proof}

Since $W_{\sigma,(H)}$ is coisotropic, there is a quotient map 
$$\pi \colon W_{\sigma,(H)} \to W_{\sigma,(H)}/_{\sim}$$
defined by the null foliation of $W_{\sigma,(H)}$. However, unlike the situation of symplectic reduction, the leaves of this null foliation do not equal the orbits of a compact group action. Instead, observe the following obvious identifications,
\begin{equation}\label{diffeomorphism}
\left( K \times_{K_{\sigma}}Q_{\sigma,(H)}\right)/_{\sim} \cong_{\text{Homeo}} K\times_{K_{\sigma}}\left(Q_{\sigma,(H)}/K_{\sigma}'\right) \cong_{\text{Diff}} K/K_{\sigma}' \times_{Z_{\sigma}} \left(Q_{\sigma,(H)}/K_{\sigma}'\right).
\end{equation}
Under the homeomorphism on the left, the quotient map $W_{\sigma,(H)} \to W_{\sigma,(H)}/_{\sim}$ is identified with the smooth submersion
$$K \times_{K_{\sigma}}Q_{\sigma,(H)} \to K\times_{K_{\sigma}}\left(Q_{\sigma,(H)}/K_{\sigma}'\right),\, (k,q) \mapsto (k,[q]).$$
Thus we conclude the following (cf. \cite[Theorem 25.2]{gs}).

\begin{Proposition}
	The quotient of each $W_{\sigma,(H)}$ by its null foliation is a smooth manifold equipped with a symplectic form, which we  denote $\widetilde\omega$, defined by the property that 
	$$ \pi^*\widetilde\omega = \omega|_{W_{\sigma,(H)}}.$$
\end{Proposition}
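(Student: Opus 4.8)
The plan is to deduce the statement from the standard descent argument for reduction along a coisotropic submanifold (cf.\ \cite[Theorem~25.2]{gs}), the one extra ingredient being that the leaf space is a genuine manifold, which is exactly what the identifications \eqref{diffeomorphism} supply. First I would record that $W_{\sigma,(H)}/_{\sim}$ carries a smooth structure: by \cite[Theorem~2.1]{sl}, $Q_{\sigma,(H)}$ is a coisotropic submanifold of $M_{\sigma}$ whose null foliation has leaves the $K_{\sigma}'$-orbits, so $Q_{\sigma,(H)}/K_{\sigma}'$ is a smooth symplectic manifold (singular reduction on the single orbit-type stratum, where $K_{\sigma}'$ acts with orbits of constant type). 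Since $K_{\sigma}'$ is normal in $K_{\sigma}$, the torus $Z_{\sigma}=K_{\sigma}/K_{\sigma}'$ acts on $Q_{\sigma,(H)}/K_{\sigma}'$, so the associated bundle $K/K_{\sigma}'\times_{Z_{\sigma}}\big(Q_{\sigma,(H)}/K_{\sigma}'\big)$ on the right of \eqref{diffeomorphism} is a smooth manifold; under the homeomorphism of \eqref{diffeomorphism} the quotient map $\pi$ becomes the smooth surjective submersion $(k,q)\mapsto(k,[q])$, whose connected fibres are precisely the null leaves.

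Next I would descend the form. The $2$-form $\omega|_{W_{\sigma,(H)}}$ is basic for $\pi$: it is horizontal, since $W_{\sigma,(H)}$ coisotropic means $\iota_V\big(\omega|_{W_{\sigma,(H)}}\big)=0$ for every $V$ tangent to the null foliation (such $V$ span $(T W_{\sigma,(H)})^{\omega}\subseteq T W_{\sigma,(H)}$), and it is foliation-invariant, since $\mathcal{L}_V\big(\omega|_{W_{\sigma,(H)}}\big)=d\,\iota_V\big(\omega|_{W_{\sigma,(H)}}\big)+\iota_V\,d\big(\omega|_{W_{\sigma,(H)}}\big)=0$ by the previous point and closedness of $\omega$. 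A basic form on the total space of a surjective submersion with connected fibres descends to a unique form downstairs, so there is a unique $\widetilde\omega$ with $\pi^{*}\widetilde\omega=\omega|_{W_{\sigma,(H)}}$; it is smooth because $\pi$ is a submersion, and closed because $\pi^{*}d\widetilde\omega=d\,\pi^{*}\widetilde\omega=0$ and $\pi^{*}$ is injective on forms.

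Third, for nondegeneracy I would argue pointwise: at $m\in W_{\sigma,(H)}$ the differential $d\pi_{m}$ identifies $T_{[m]}\big(W_{\sigma,(H)}/_{\sim}\big)$ with $T_{m}W_{\sigma,(H)}\big/T_{m}\big(K_{\mu(m)}'\cdot m\big)$, and by Proposition~\ref{prop:2} we have $T_{m}\big(K_{\mu(m)}'\cdot m\big)=(T_{m}W_{\sigma,(H)})^{\omega}$. Coisotropy gives $(T_{m}W_{\sigma,(H)})^{\omega}\subseteq T_{m}W_{\sigma,(H)}$, so this subspace is precisely the radical of the restriction of $\omega_{m}$ to $T_{m}W_{\sigma,(H)}$, whence the induced bilinear form on the quotient, namely $\widetilde\omega_{[m]}$, is nondegenerate.

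Almost everything here is the textbook reduction argument; the one step that genuinely uses the preceding constructions rather than a generic foliation is the first one, since the leaf space of a null foliation need not be Hausdorff or smooth and $\pi$ need not be a submersion in general. What makes it go through is \eqref{diffeomorphism}, and underneath it the smoothness of $Q_{\sigma,(H)}/K_{\sigma}'$, which is the singular reduction theorem \cite[Theorem~2.1]{sl} applied on a single orbit-type stratum; the remaining associated-bundle manipulations are formal, so I expect no obstacle beyond assembling these inputs.
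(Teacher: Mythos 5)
Your argument is correct and follows essentially the same route as the paper: the paper likewise uses the identifications \eqref{diffeomorphism} to realize the leaf space as the smooth associated bundle $K\times_{K_\sigma}(Q_{\sigma,(H)}/K_{\sigma}')$ with $\pi$ a submersion, and then invokes the standard descent of the symplectic form along a coisotropic quotient (citing \cite[Theorem 25.2]{gs}), which is exactly the basic-form and nondegeneracy argument you spell out.
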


Combining the results above, we have decomposed $M$ as a disjoint union
$$M = \bigcup_{\sigma\subseteq \Delta}\bigcup_{H\leq K_{\sigma}'}W_{\sigma,(H)}$$
such that each $W_{\sigma,(H)}$ is a smooth, coisotropic submanifold of $M$, invariant under the action of $K$. We define an equivalence relation on $M$ by $m\sim m'$ if $m,m'$ are contained in the same leaf of the null foliation of one of the pieces $W_{\sigma,(H)}$. We call this the \emph{stratified null foliation} of $(M,\omega,\mu)$. 

The quotient of $M$ by the stratified null foliation is a topological space with a decomposition into pieces that are smooth symplectic manifolds:
$$ M/_{\sim}\, = \bigcup_{\sigma\subseteq \Delta}\bigcup_{H\leq K_{\sigma}'}W_{\sigma,(H)}/_{\sim}.$$
Although it is not a manifold, $M/_{\sim}$ has a naturally defined algebra of functions (the quotient differential structure),
$$C^{\infty}(M/_{\sim}) : = \left\{ f \in C(M/_{\sim}) \colon\, \pi^*f \in C^{\infty}(M)^{\sim}\right\}$$
where $\pi\colon M \to M/_{\sim}$ is the quotient map and $C^{\infty}(M)^{\sim}\subseteq C^{\infty}(M)$ is the subalgebra of smooth functions on $M$ that are locally constant on leaves of the stratified null foliation.  The inclusion maps $\iota\colon W_{\sigma,(H)}/_{\sim} \to M/_{\sim}$ are smooth in the differential sense: for all $f\in C^{\infty}(M/_{\sim})$, 
$$\pi^*(\iota^*f) = (\pi^*f)|_{W_{\sigma,(H)}}$$
so $\iota^*f \in C^{\infty}(W_{\sigma,(H)}/_{\sim})$.  


\begin{Proposition}\label{poisson-structure} The bracket on $C^{\infty}(M/_{\sim})$ defined by the equation
$$ \pi^*\{f,g\}_{M/\sim} = \{\pi^*f,\pi^*g\}_M$$ is a Poisson bracket. Moreover, the inclusion maps $\iota\colon W_{\sigma,(H)}/_{\sim} \to M/_{\sim}$ are Poisson with respect to $\{\cdot,\cdot\}_{M/_{\sim}}$ and the natural symplectic structure on each symplectic piece, $W_{\sigma,(H)}/_{\sim}$.
\end{Proposition}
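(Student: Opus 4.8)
The plan is to reduce the statement to coisotropic (presymplectic) reduction, applied separately on each piece $W_{\sigma,(H)}$. Note first that the pullback of any function on $M/_{\sim}$ is automatically constant on the leaves of the stratified null foliation, so to define $\{f,g\}_{M/_{\sim}}$ it suffices to prove that $\{\pi^*f,\pi^*g\}_M$ is again smooth and constant on leaves: it is then $\pi^*h$ for a unique (since $\pi^*$ is injective) continuous function $h$ on $M/_{\sim}$, and $\pi^*h = \{\pi^*f,\pi^*g\}_M$ is smooth and, being constant on leaves, locally constant on leaves, so $h\in C^\infty(M/_{\sim})$; uniqueness makes the defining equation unambiguous. (Since $K_\sigma$ is a coadjoint stabiliser in a compact connected group it is connected, hence so is $K_\sigma' = [K_\sigma,K_\sigma]$, so all leaves are connected and $\pi^*$ is in fact a bijection onto $C^\infty(M)^\sim$; this is not needed below.)

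Fix a piece $W = W_{\sigma,(H)}$, with null-foliation quotient $\pi_W\colon W\to W/_{\sim}$ and reduced symplectic form $\widetilde\omega$, $\pi_W^*\widetilde\omega = \omega|_W$. Since $W$ is coisotropic (Proposition~\ref{prop:2}), the tangent distribution of its null foliation is $(TW)^\omega\subseteq TW$, so, using the general identity $\big((T_mW)^\omega\big)^\omega = T_mW$, a smooth function $h$ on $M$ is constant on the leaves of $W$ if and only if $dh_m$ kills $(T_mW)^\omega$ for all $m\in W$ if and only if the Hamiltonian vector field $X_h$ is tangent to $W$ along $W$. Apply this with $h = \pi^*f$ and $h = \pi^*g$: then $X_{\pi^*f}|_W$ and $X_{\pi^*g}|_W$ are vector fields on $W$ satisfying $\omega|_W(X_{\pi^*f}|_W,\,\cdot\,) = d\big((\pi^*f)|_W\big)$ and the analogue for $g$, and since $\omega|_W$ is closed they preserve $\omega|_W$, hence its kernel $(TW)^\omega$, hence descend along $\pi_W$; the descended fields are the $\widetilde\omega$-Hamiltonian vector fields of $\iota^*f$ and $\iota^*g$ (these are the functions with $\pi_W^*(\iota^*f) = (\pi^*f)|_W$, etc.; this step is the standard presymplectic reduction argument, the one used to construct $\widetilde\omega$, cf.\ \cite[Theorem~25.2]{gs}). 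Consequently
$$\{\pi^*f,\pi^*g\}_M\big|_W \;=\; \omega|_W\big(X_{\pi^*f}|_W,\,X_{\pi^*g}|_W\big)\;=\;\pi_W^*\big(\widetilde\omega(X_{\iota^*f},X_{\iota^*g})\big)\;=\;\pi_W^*\,\{\iota^*f,\iota^*g\}_{W/_{\sim}}.$$

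Because $M$ is the disjoint union of the pieces $W_{\sigma,(H)}$ and each leaf of the stratified null foliation lies in a single piece, the last display shows $\{\pi^*f,\pi^*g\}_M$ is constant on every leaf; with smoothness, the first paragraph then gives a well-defined $\RR$-bilinear bracket $\{\cdot,\cdot\}_{M/_{\sim}}$ on $C^\infty(M/_{\sim})$. Its antisymmetry, the Leibniz rule and the Jacobi identity pass verbatim from $\{\cdot,\cdot\}_M$: apply $\pi^*$, invoke the corresponding identity on $M$ (using that $\pi^*$ is an algebra homomorphism for Leibniz), and cancel the injective $\pi^*$. Finally, restricting the defining equation to $W_{\sigma,(H)}$, using $(\pi^*h)|_W = \pi_W^*(\iota^*h)$, and comparing with the last display gives $\pi_W^*\{\iota^*f,\iota^*g\}_{W/_{\sim}} = \pi_W^*\big(\iota^*\{f,g\}_{M/_{\sim}}\big)$; since $\pi_W^*$ is injective, $\{\iota^*f,\iota^*g\}_{W/_{\sim}} = \iota^*\{f,g\}_{M/_{\sim}}$, i.e.\ $\iota$ is Poisson.

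The only nonformal step is the equivalence ``$h$ constant on the leaves of $W$'' $\Longleftrightarrow$ ``$X_h$ tangent to $W$'', where the coisotropy of $W_{\sigma,(H)}$ is used essentially: it is what identifies the null distribution with $(TW)^\omega\subseteq TW$ and, via double symplectic orthogonal complementation, converts the foliation-theoretic hypothesis on $\pi^*f,\pi^*g$ into the dynamical statement required to run presymplectic reduction. After that the reasoning is routine and entirely local along each $W_{\sigma,(H)}$, hence insensitive to the differing dimensions of the connected components of a given $W_{\sigma,(H)}$; no compatibility among the forms $\widetilde\omega$ on distinct pieces is required, since the Poisson property of $\iota$ is asserted one piece at a time.
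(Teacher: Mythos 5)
Your proof is correct and follows essentially the same route as the paper: both hinge on the observation that a function constant on the leaves of $W_{\sigma,(H)}$ has Hamiltonian vector field tangent to $W_{\sigma,(H)}$ (via $\bigl((TW)^{\omega}\bigr)^{\omega}=TW$), and both then invoke standard coisotropic reduction \`a la \cite[Theorems 25.2--25.3]{gs}. The only cosmetic difference is that you deduce leaf-constancy of $\{\pi^*f,\pi^*g\}_M$ from the identity $\{\pi^*f,\pi^*g\}_M|_W=\pi_W^*\{\iota^*f,\iota^*g\}_{W/_\sim}$, whereas the paper verifies it by a direct Lie-derivative computation along the null distribution; the two arguments are interchangeable.
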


\begin{Remark}
	This proposition shows that one may alternately view $\{,\}_{M/\sim}$ as defined point-wise on $M/_{\sim}$ by the symplectic structure on each symplectic piece.
\end{Remark}

\begin{proof}
	To show that $\{\cdot,\cdot\}_{M/_{\sim}}$ is a Poisson bracket on $C^{\infty}(M/_{\sim})$, we simply must show that $C^{\infty}(M)^{\sim}$ is a Poisson subalgebra of $C^{\infty}(M)$ (and therefore $\{\cdot,\cdot\}_{M/_{\sim}}$ is well-defined). 
	
	Let $f,g \in C^{\infty}(M)^{\sim}$ and let $W$ be one of the coisotropic submanifolds of $M$ as defined in \eqref{definition-of-W}.  Since $f$ and $g$ are constant on the leaves of the null foliation of $W$, we have that for all $Y\in T_wW^{\omega}$, 
	$$\omega(X_f,Y) = df(Y) = \mathcal{L}_Yf = 0$$
	and similarly for $g$. Thus $X_f,X_g \in (TW^{\omega})^{\omega} = TW$.
	
	Thus, for $Y\in T_wW^{\omega}$, we have that 
	\begin{equation*}
		\begin{split}
			\mathcal{L}_Y\{f,g\}_M & = \mathcal{L}_Y\omega(X_f,X_g)\\
				& = \iota_Y d(\omega(X_f,X_g))\\
				& = \iota_Y \iota_{[X_g,X_f]}\omega \\
				& = \omega([X_g,X_f],Y)
		\end{split}
	\end{equation*}
	which equals 0 since $[X_f,X_g]\in TW$. Thus $\{f,g\}_M \in C^{\infty}(M)^{\sim}$. Additionally, one sees that $fg \in C^{\infty}(M)^{\sim}$, so $C^{\infty}(M)^{\sim}$ is a Poisson subalgebra.

	To see that the inclusions of the symplectic pieces are Poisson, it is sufficient to observe that for all $f,g \in C^{\infty}(M)^{\sim}$, 
	$$\pi^*\iota^*\left\{f,g\right\}_{M/_{\sim}} = \left\{\pi^*f,\pi^*g\right\}_M\vert_{W_{\sigma,(H)}} = \pi^*\left\{\iota^*f,\iota^*g\right\}_{W_{\sigma,(H)}/_{\sim}}.$$
	See \cite[Theorem 25.3]{gs}.
\end{proof}

\begin{Remark}\label{rem:open dense symplectomorphism} If $\sigma_{\text{prin}}$ is the principal stratum of $\Delta$ corresponding to $(M,\omega,\mu)$. By part (3) of the symplectic cross-section theorem, the action of $K_{\sigma_{\text{prin}}}'$ on $\mu^{-1}(\sigma_{\text{prin}})$ is trivial, so $W_{\sigma_{\text{prin}},(K_{\sigma_{\text{prin}}}')} = K\cdot \mu^{-1}(\sigma_{\text{prin}})$. Since this is an open subset of $M$, its null foliation is trivial, so the restriction of $\pi$ is a symplectomorphism onto its image.	
\end{Remark}

\subsection{The $K\times T$ action on $M/_{\sim}$}

Since the leaves of the equivalence relation $\sim$ are invariant under the action of $K$ and $\mu$ is constant on these leaves, both the action of $K$ and the map $\mu$ descend to $M/_{\sim}$ to define a continuous function $\widetilde\mu$ and a continuous action of $K$. In terms of the diffeomorphism $W_{\omega,(H)}/_{\sim} \cong K\times_{K_{\sigma}}(Q_{\sigma,(H)}/K_{\sigma}')$, we have
$$k\cdot (k',[q]) = (kk',[q]) \text{ and } \widetilde\mu(k,[q]) = k\mu(q).$$
As observed in \cite{gsj}, the action of $T$ on $M$ leaves $\mu^{-1}(\Delta)$ invariant and (since $T$ normalizes each $K_{\sigma}'$) descends to a continuous action of $T$ on $\mu^{-1}(\Delta)/\sim$. This extends by $K$-equivariance to a continuous action of $T$ on $M/_{\sim}$ defined by the formula 
$$t\ast (k,[q]) = (k,t\cdot[q]) = (ktk^{-1}k,[q]) = (ktk^{-1})\cdot (k,[q])$$
which commutes with the action of $K$. Note that the action of $T$ on $\mu^{-1}(\Delta)$ does not, in general, extend to $M$ by $K$-equivariance.

It follows from the identifications established in the next section that the restriction of this $K\times T$ action to each symplectic piece of $M/_{\sim}$ is Hamiltonian, generated by the map $(\widetilde\mu,s\circ\widetilde\mu)$ where $s\colon\mathfrak{k}^* \to \Delta$ is the sweeping map of Thimm's trick (see \cite{hmm} for details).

\section{Symplectic contraction}\label{s:3}

In \cite{hmm}, HMM define the \emph{symplectic contraction} of a connected Hamiltonian $K$-manifold $(M,\omega,\mu)$ as the topological space 
$$ M^{sc} := \left(EM\times E_{\mathcal{L}}T^*K\right)\sslash_0 T$$
where $T$ is a choice of maximal torus of $K$, $EM$ is the symplectic implosion of $(M,\omega,\mu)$ with respect to a choice of positive Weyl chamber $\Delta \subseteq \mathfrak{t}^*$, and $E_{\mathcal{L}}T^*K$ is the symplectic implosion of $T^*K$ with respect to the cotangent lift of the left action of $K$ on itself and the opposite Weyl chamber $-\Delta$ (see \cite{hmm,gsj} for definitions).  This choice of $E_{\mathcal{L}}T^*K$ is isomorphic to $E_{\mathcal{R}}T^*K$ taken with respect to $-\Delta$ via the symplectic involution of $T^*K$, $(k,\lambda) \mapsto (k^{-1},-k\lambda)$, and thus we can rewrite the definition above as 
$$ M^{sc} := \left(EM\times E_{\mathcal{R}}T^*K\right)\sslash_0 T.$$
 The definition with $E_{\mathcal{R}}T^*K$ is preferable since $EM\times E_{\mathcal{R}}T^*K$ decomposes nicely into smooth pieces diffeomorphic to
	$$ (Q_{\sigma,(H)}/K_{\sigma}') \times K/K_{\sigma}'\times (-\sigma),$$ 
	where $Q_{\sigma,(H)}/K_{\sigma}'$ is the same as in the proof of Proposition \ref{prop:2}, equipped with symplectic form 
	$\widetilde \omega + \widetilde \Omega$ (cf. \cite{gsj}).  

The ``symplectic reduction'' by $T$ is taken with respect to the diagonal action of $T$ on $EM\times E_{\mathcal{R}}T^*K$ generated on symplectic strata by the  moment map $\mu+ \mu_{\mathcal{R},T}$ (or, equivalently, the diagonal action of $T$ on $EM\times E_{\mathcal{L}}T^*K$ generated by $\mu+ \mu_{\mathcal{L},T}$).  With respect to the description of the symplectic pieces given above, this moment map is explicitly given by the formula
$$(\mu+ \mu_{\mathcal{R},T})([q],kK_{\sigma}',\lambda) = \mu(q) + \lambda.$$
The diagonal action of $T$ is given explicitly by the formula
$$t\cdot ([q],kK_{\sigma}',\lambda) = ([t\cdot q],kt^{-1}K_{\sigma}',t\lambda).$$
Combining the facts above, one sees that the symplectic pieces of $M^{sc}$ are diffeomorphic to
$$ (Q_{\sigma,(H)}/K_{\sigma}')\times_{T}K/K_{\sigma}'.$$
We record some topological facts about symplectic contraction.

\begin{Proposition}\label{prop:sc basic topology}
	$M^{sc}$ is Hausdorff, second countable, locally compact, and connected.
\end{Proposition}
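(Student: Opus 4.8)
The plan is to read off all four properties from the quotient description $M^{sc} \cong M/_{\sim}$ established in the previous section (together with the explicit smooth-piece decomposition), reducing everything to the corresponding properties of the Hamiltonian manifold $M$, which is assumed connected (and, implicitly, a manifold, hence second countable, locally compact Hausdorff). First I would fix the quotient map $\pi\colon M \to M/_{\sim}$ and recall that it is continuous, surjective, and — since each piece $W_{\sigma,(H)} \to W_{\sigma,(H)}/_{\sim}$ is a smooth submersion with connected fibers (the leaves are the connected groups orbits $K'_{\mu(m)}\cdot m$, or rather their connected components, and in any case $\pi$ is open on each piece) — I would first argue that $\pi$ is an open map. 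Openness of $\pi$ is the technical crux: it is what lets one transport second countability and local compactness down the quotient. On a single piece this is exactly the statement that the orbit map of the $K_\sigma'$-action (composed with the $K$-sweep) is open, which follows from the local normal form / the description in \eqref{diffeomorphism}; the mild subtlety is that openness must be checked for the quotient topology on the whole of $M^{sc}$, not just piece by piece, so I would use the fact that the pieces $W_{\sigma,(H)}$ together with the closure relations among them are locally finite and compatible with the stratification coming from the moment polytope.

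Granting that $\pi$ is continuous, surjective, and open, connectedness of $M^{sc}$ is immediate: $M$ is connected and the continuous image of a connected space is connected. Second countability is likewise immediate: the open continuous surjective image of a second countable space is second countable (push forward a countable base; openness guarantees the images are open). Local compactness follows the same way — an open continuous image of a locally compact space is locally compact (given $[m]\in M^{sc}$ and a neighborhood $U$, pull back to a neighborhood of $m$, shrink to one with compact closure inside $\pi^{-1}(U)$, and push the compact closure forward; its image is compact and contains the open set $\pi(\text{int})$).

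Hausdorffness is the one property that does not follow formally from a quotient of a Hausdorff space, so this is where the real argument lies, and I expect it to be the main obstacle. The standard criterion is that a quotient $X/R$ is Hausdorff iff the graph of $R$ is closed in $X\times X$ and the quotient map is open; since we will have shown $\pi$ is open, it suffices to show the equivalence relation $\sim$ has closed graph in $M\times M$. I would prove this by using the moment map: if $m_n \sim m_n'$ with $m_n \to m$ and $m_n' \to m'$, then $\mu(m)=\mu(m')$ (as $\mu$ is constant on leaves and continuous), so $m$ and $m'$ lie over the same point $\lambda = \mu(m)$ of the Weyl chamber, and both lie in the closure of the relevant stratum; one then identifies the limiting leaf using the symplectic cross-section theorem to pass to the slice $M_\sigma$ where the relation is given by orbits of the compact group $K_\sigma'$ (whose graph is closed because the group is compact), together with the $K$-equivariant sweep, and a short limiting argument — tracking the group elements $k_n$ with $m_n = k_n\cdot q_n$, $q_n \in Q_{\sigma_n,(H_n)}$, passing to a convergent subsequence of the $k_n$, and using properness of the stratification — shows $m \sim m'$. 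An alternative, possibly cleaner, route is to invoke the known Hausdorffness of symplectic implosion and of symplectic reduction at $0$ by the compact torus $T$ (both are established in \cite{gsj}), since $M^{sc}$ is by \emph{definition} such a reduction of a product of imploded spaces, and reductions at a point by a compact group of a Hausdorff space are Hausdorff; this sidesteps the closed-graph computation entirely, at the cost of leaning on the original HMM/GJS construction rather than the intrinsic quotient picture. I would present the implosion-reduction argument as the main proof and remark that it also follows intrinsically from the closed-graph criterion once openness of $\pi$ is in hand.
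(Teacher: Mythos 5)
There is a circularity in your main line of argument. The proposition is about $M^{sc}$ as HMM define it, namely $\left(EM\times E_{\mathcal{R}}T^*K\right)\sslash_0 T$, and at this point in the paper the identification $M^{sc}\cong M/_{\sim}$ is not yet available: it is obtained only later, from Proposition \ref{prop:fibres} together with Corollary \ref{cor:quotient map}, and the proof of Corollary \ref{cor:quotient map} uses precisely the local compactness of $M^{sc}$ asserted here (proper map to a locally compact space is closed, hence a quotient map). So deriving second countability and local compactness of $M^{sc}$ by pushing them forward along the open map $\pi\colon M\to M/_{\sim}$ assumes the homeomorphism you are implicitly trying to justify. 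Note also that a continuous bijection $M/_{\sim}\to M^{sc}$ (which you do get for free from the fiber computation) transports none of these properties to the coarser target, so there is no way to repair the argument without first showing $\Phi_M$ is a quotient map --- which is exactly what requires this proposition.

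The fix is the route you mention only as an ``alternative'' for Hausdorffness, and it is in fact the paper's entire proof: work directly with the definition. The level set $(\mu+\mu_{\mathcal{R},T})^{-1}(0)$ is closed in $EM\times E_{\mathcal{R}}T^*K$, which by \cite[Theorem 2.3]{gsj} is Hausdorff, locally compact, and second countable; the quotient by the continuous action of the compact torus $T$ is an open map, so local compactness and second countability descend, and the quotient of a locally compact Hausdorff space by a compact (proper) group action is Hausdorff. Connectedness then follows, as you say, from continuity and surjectivity of $\Phi_M$ (Proposition \ref{th:hmm contraction map properties}), which HMM establish independently. Your instinct that Hausdorffness is ``the main obstacle'' is an artifact of the intrinsic quotient picture; from the implosion definition it is the easy part, and the closed-graph analysis of $\sim$ in $M\times M$ (whose limiting argument you only sketch, leaning on an unproved ``properness of the stratification'') is not needed.
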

\begin{proof}
	Since $\mu + \mu_{\mathcal{R},T}$ is continuous, the level set $(\mu + \mu_{\mathcal{R},T})^{-1}(0)$ is closed in $EM \times E_{\mathcal{R}}T^*K$. By \cite[Theorem 2.3]{gsj} this implies that the level set is Hausdorff, locally compact, and second countable.
	
	The action of the compact group $T$ on the level set $(\mu + \mu_{\mathcal{R},T})^{-1}(0)$ is continuous, so the quotient map is open. Thus $M^{sc}$ is locally compact and second countable. Furthermore, any quotient of any locally compact Hausdorff space by a proper group action is Hausdorff, so $M^{sc}$ is Hausdorff.
	
	Finally, HMM prove that the symplectic contraction map $\Phi_{M}\colon M \rightarrow M^{sc}$ is continuous and surjective (see Proposition \ref{th:hmm contraction map properties} below), thus $M^{sc}$ is connected.
\end{proof}

HMM define the \emph{symplectic contraction map} $\Phi_M\colon M \rightarrow M^{sc}$ by the formula
	$$m \mapsto [hm,(h,\mu(m))] \in \left(EM\times E_{\mathcal{L}}T^*K\right)\sslash_0 T$$
where $h\in K$ such that $h\mu(m) \in \Delta$. HMM note that this map is well defined and $K$-equivariant: 
$$\Phi_M(k\cdot m) = [(hk^{-1})km,(hk^{-1},\mu(km))] = [(hm,(hk^{-1},k\mu(m))] = \mathcal{R}_k \Phi_M( m)$$
where the $K$ action on $M^{sc}$ descends from the right $K$ action on $E_{\mathcal{L}}T^*K$. Using the symplectic involution above, and writing $m=k\cdot q$ for $q\in \mu^{-1}(\Delta)$, the map $\Phi_M$ can be written equivalently as 
\begin{equation}\label{right-contraction-formula}
	k\cdot q \mapsto [q,(k,-\mu(q))] \in \left(EM\times E_{\mathcal{R}}T^*K\right)\sslash_0 T
\end{equation}
in which case $K$-equivariance is with respect the the descended left $K$ action on $E_{\mathcal{R}}T^*K$. HMM prove two main facts:


\begin{Proposition}\label{th:hmm contraction map properties}\cite{hmm} $\Phi_M$ is continuous, proper, and surjective.	
\end{Proposition}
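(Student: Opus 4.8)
The statement is due to Hilgert--Manon--Martens \cite{hmm}, but it also follows transparently from the description of $M^{sc}$ recalled above, and the plan is to reconstruct the argument from properties of two well-understood quotient maps. First, since every coadjoint orbit meets $\Delta$, the action map $q_M\colon K\times\mu^{-1}(\Delta)\to M$, $(k,q)\mapsto k\cdot q$, is a continuous surjection; it is proper because $K$ is compact (the preimage of a compact set $C$ lies in $K\times(\mu^{-1}(\Delta)\cap K\cdot C)$, a compact set), hence a quotient map onto the locally compact Hausdorff manifold $M$. Second, writing $Z:=(\mu+\mu_{\mathcal{R},T})^{-1}(0)\subseteq EM\times E_{\mathcal{R}}T^*K$, the reduction map $\beta\colon Z\to Z/T=M^{sc}$ is proper: $Z$ is locally compact Hausdorff (as in the proof of Proposition~\ref{prop:sc basic topology}) and $T$ is compact. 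Finally, writing $\iota_M\colon\mu^{-1}(\Delta)\to EM$ and $\iota_{\mathcal{R}}\colon\mu_{\mathcal{R}}^{-1}(-\Delta)\to E_{\mathcal{R}}T^*K$ for the implosion quotient maps, the assignment $\Psi(k,q):=\bigl(\iota_M(q),\iota_{\mathcal{R}}(k,-\mu(q))\bigr)$ is a continuous map $K\times\mu^{-1}(\Delta)\to Z$ --- legitimate since $\mu(q)\in\Delta$, and landing in $Z$ because the two $T$-moment-map values $\mu(q)$ and $-\mu(q)$ cancel --- and by formula~\eqref{right-contraction-formula} one has $\beta\circ\Psi=\Phi_M\circ q_M$.

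Granting this, continuity and surjectivity are immediate. Since $q_M$ is a quotient map and $\beta\circ\Psi$ is continuous and constant on its fibres (this is precisely the verification that $\Phi_M$ is well defined; see \cite{hmm}), the descended map $\Phi_M$ is continuous. For surjectivity, represent an arbitrary point of $M^{sc}$ by $(x,y)\in Z$, and write $x=\iota_M(q)$ with $q\in\mu^{-1}(\Delta)$ and $y=\iota_{\mathcal{R}}(k,\lambda)$ with $\lambda\in-\Delta$; the equation cutting out $Z$ forces $\mu(q)+\lambda=0$, so $(x,y)=\Psi(k,q)$ and the chosen point equals $\beta(\Psi(k,q))=\Phi_M(k\cdot q)$.

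Properness is where the real work lies. Because $q_M$ is a proper surjection, $\Phi_M^{-1}(C)$ is compact if and only if $q_M^{-1}(\Phi_M^{-1}(C))=\Psi^{-1}(\beta^{-1}(C))$ is, and $\beta^{-1}(C)$ is compact since $\beta$ is proper; so it suffices to prove that $\Psi$ is proper. Since the $K$-factor is already compact, for a compact $D\subseteq Z$ with image $D_1$ under the projection $Z\to EM$ we have $\Psi^{-1}(D)\subseteq K\times\iota_M^{-1}(D_1)$, a closed subset; hence the whole matter collapses to the single claim that the implosion quotient map $\iota_M\colon\mu^{-1}(\Delta)\to EM$ is proper. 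This is the crux, and it is not automatic, since $\mu$ itself need not be proper and $EM$ is typically noncompact. It does hold, because $\iota_M$ is a continuous surjection with compact fibres --- over the relative interior of a face $\sigma$ the fibres are orbits of the compact group $[K_\sigma,K_\sigma]$ --- and it is a closed map: a closed subset $A\subseteq\mu^{-1}(\Delta)$ has closed saturation, as one checks by taking a convergent sequence in the saturation, passing to a subsequence along which the relevant face $\sigma$ is constant (there are finitely many), extracting a limit of the group elements in the compact group $[K_\sigma,K_\sigma]$, and using the inclusions $[K_\sigma,K_\sigma]\subseteq[K_\tau,K_\tau]$ valid whenever $\tau$ is a face of $\sigma$ to land back in the saturation. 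A continuous closed surjection with compact fibres is proper, which completes the reduction. (The details of this properness statement are carried out in \cite{hmm}.)
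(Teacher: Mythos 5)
Your proposal is correct, but note that the paper itself offers no proof of this statement: it is quoted verbatim from Hilgert--Manon--Martens with only the citation \cite{hmm}, so there is no internal argument to compare against. What you have written is a legitimate self-contained reconstruction. The factorization $\Phi_M\circ q_M=\beta\circ\Psi$ through $K\times\mu^{-1}(\Delta)$ is the right skeleton: continuity and surjectivity then fall out of $q_M$ being a quotient map and of the constraint $\mu(q)+\lambda=0$ cutting out $Z$, exactly as you say (with well-definedness of $\Phi_M$ legitimately deferred to \cite{hmm}, as the paper itself does). You also correctly isolate the one genuinely nontrivial input, the properness of the implosion quotient map $\iota_M\colon\mu^{-1}(\Delta)\to EM$, and your argument for it is sound: the fibers are orbits of the compact groups $[K_\sigma,K_\sigma]$, and closedness of saturations follows from the sequence argument using finiteness of the face stratification of $\Delta$, compactness of $[K_\sigma,K_\sigma]$, and the inclusion $[K_\sigma,K_\sigma]\subseteq[K_\tau,K_\tau]$ when $\tau\preceq\sigma$ (since $K_\sigma\subseteq K_\tau$ for $\tau$ in the closure of $\sigma$); sequential closedness suffices because $\mu^{-1}(\Delta)$ is metrizable, and a closed continuous map with compact fibers is proper. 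The only cosmetic remarks: in the properness reduction you only need the ``if'' direction of your ``if and only if'' (continuous surjective image of a compact set is compact), and the phrase ``a closed subset'' should be read as referring to $\Psi^{-1}(D)$ sitting inside the compact set $K\times\iota_M^{-1}(D_1)$. Neither affects correctness.
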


\begin{Proposition}\label{prop:hmm contraction map symplectic}\cite{hmm} The restriction of $\Phi_M$ to the open dense set $\mu^{-1}(\Sigma_{\sigma_{\text{\emph{prin}}}})$ is a symplectomorphism onto its image.
\end{Proposition}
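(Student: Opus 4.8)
The plan is to restrict everything to the open dense subset $U := \mu^{-1}(\Sigma_{\sigma_{\mathrm{prin}}})$ cut out by the principal stratum and to reduce the claim to an essentially formal comparison of two explicit models of $U$: one furnished by the symplectic cross-section theorem, the other by the description of the symplectic pieces of $M^{sc}$. Write $\sigma = \sigma_{\mathrm{prin}}$. By Remark~\ref{rem:open dense symplectomorphism}, $U = W_{\sigma,(K_\sigma')} = K\cdot\mu^{-1}(\sigma)$ is open in $M$; by Theorem~\ref{th:symplectic slice theorem}(2) it is symplectomorphic to $K\times_{K_\sigma}\mu^{-1}(\sigma)$, and by Theorem~\ref{th:symplectic slice theorem}(3) the group $K_\sigma'$ acts trivially on $\mu^{-1}(\sigma)$, so that $Q_{\sigma,(K_\sigma')}/K_\sigma' = \mu^{-1}(\sigma)$. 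Hence, in the notation recalled just before Proposition~\ref{prop:sc basic topology}, the symplectic piece of $M^{sc}$ indexed by $(\sigma, K_\sigma')$ is $\mu^{-1}(\sigma)\times_T K/K_\sigma'$, the reduction at $0$ of $\mu^{-1}(\sigma)\times K/K_\sigma'\times(-\sigma)$ (carrying $\widetilde\omega + \widetilde\Omega$) by the diagonal $T$-action.

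Next I would check that $\Phi_M$ maps $U$ bijectively onto this piece. By \eqref{right-contraction-formula}, $\Phi_M(k\cdot q) = [q,\, kK_\sigma',\, -\mu(q)]$ for $q\in\mu^{-1}(\sigma)$, which lies in the piece above since $\mu(q) + (-\mu(q)) = 0$; conversely every point of the piece has the form $[q, kK_\sigma', \lambda]$ with $\lambda = -\mu(q)$, so it equals $\Phi_M(k\cdot q)$. Injectivity reduces, using that $T$ fixes $\mathfrak t^*\supseteq\Delta$ pointwise and that $K_\sigma'$ acts trivially on $\mu^{-1}(\sigma)$, to the implication: if $t\cdot q_1 = q_2$ and $k_1 t^{-1}\in k_2 K_\sigma'$, then $k_1\cdot q_1 = k_2\cdot q_2$.

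Then I would upgrade this bijection to a symplectomorphism. Transported through $U\cong K\times_{K_\sigma}\mu^{-1}(\sigma)$, the map $\Phi_M|_U$ is $(k,q)\mapsto[q, kK_\sigma', -\mu(q)]$, which is evidently smooth. Next I would establish the identity $\Phi_M^*\widetilde\omega = \omega$ on $U$: since both forms are $K$-invariant and $\Phi_M$ is $K$-equivariant it suffices to compare them along $\mu^{-1}(\sigma)$, where it is a short linear-algebra computation from \eqref{right-contraction-formula} and the relation $\pi^*\widetilde\omega = \omega$ defining the reduced form, using that $\widetilde\omega = \omega$ on $\mu^{-1}(\sigma)$ and that the term $\lambda = -\mu(q)$ supplies, via $-d\mu$, exactly the cross-terms of the symplectic form on $K\times_{K_\sigma}M_\sigma$ prescribed by Theorem~\ref{th:symplectic slice theorem}(2). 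In particular $d\Phi_M$ is everywhere injective on $U$, so $\Phi_M|_U$ is an injective immersion; since $U = \Phi_M^{-1}(\Phi_M(U))$, the restriction $\Phi_M|_U\colon U\to\Phi_M(U)$ is proper (Proposition~\ref{th:hmm contraction map properties}), hence an embedding, and being surjective onto the manifold $\Phi_M(U)$ it is a diffeomorphism, and therefore a symplectomorphism.

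The hard part will be the bookkeeping in the middle of the previous step: one has to juggle three reductions at once --- the one implicit in Theorem~\ref{th:symplectic slice theorem}(2), namely $K\times_{K_\sigma}M_\sigma = (T^*K\times M_\sigma)\sslash_0 K_\sigma$, the implosion producing $E_{\mathcal R}T^*K$, and the diagonal reduction by $T$ defining $M^{sc}$ --- and confirm that the canonical forms $\Omega,\widetilde\Omega$ and the moment maps $\mu_{\mathcal R},\mu_{\mathcal R, T}$ assemble with the correct signs; this is precisely where the choice of the opposite chamber $-\Delta$ (equivalently the involution $(k,\lambda)\mapsto(k^{-1},-k\lambda)$) enters. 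Finally, I would remark that once the identification $M^{sc}\cong M/_{\sim}$ is available the proposition is immediate from Remark~\ref{rem:open dense symplectomorphism}, since $W_{\sigma,(K_\sigma')}$ is open in $M$ and hence has trivial null foliation, so that the quotient map---which is $\Phi_M$---restricts there to a symplectomorphism.
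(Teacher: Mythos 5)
Your argument is correct, but note that the paper itself offers no proof of this proposition: it is stated as a quoted result of \cite{hmm}. What you have written is essentially the specialization to the principal stratum of the argument the paper gives later for the general statement, namely the combination of Remark \ref{rem:open dense symplectomorphism} (the open piece $W_{\sigma_{\mathrm{prin}},(K'_{\sigma_{\mathrm{prin}}})}$ has trivial null foliation), Proposition \ref{prop:fibres} (the fibres of $\Phi_M$ are the leaves, hence points on this piece, giving your injectivity step), and the pullback computation $\Phi_M^*(\widetilde\omega+\widetilde\Omega)=\omega$ at points of $\mu^{-1}(\sigma)$ extended by $K$-equivariance, which is exactly the computation in the proof of Proposition \ref{symplectomorphism}. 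Your bookkeeping is right where it matters: the identity $[q_2]=[t\cdot q_1]$ together with triviality of the $K'_{\sigma}$-action on $\mu^{-1}(\sigma_{\mathrm{prin}})$ and the fact that $T$ normalizes $K'_{\sigma}$ does yield $k_1\cdot q_1=k_2\cdot q_2$, and the cross-terms $\langle -d\mu_q(X),Y'\rangle$ coming from the $\lambda=-\mu(q)$ slot reproduce $\omega_q(X,\underline{Y'})$ by Hamilton's equation, as in the paper's displayed computation.
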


We observe that, combined with Proposition \ref{prop:sc basic topology}, Proposition \ref{th:hmm contraction map properties}  immediately implies the following.

\begin{Corollary}\label{cor:quotient map}
	$\Phi_{M}$ is a quotient map\footnote{Recall, a continuous map $f:X\to Y$ is a quotient map if it is surjective and a subset $U\subseteq Y$ is open iff $f^{-1}(U)$ is open.}.
\end{Corollary}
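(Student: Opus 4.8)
The plan is to reduce this to two standard facts of point-set topology: (a) a continuous proper map into a locally compact Hausdorff space is a closed map, and (b) a continuous closed surjection is a quotient map. Both of the inputs we need are already available: $\Phi_M$ is continuous, proper, and surjective by Proposition \ref{th:hmm contraction map properties}, and $M^{sc}$ is locally compact and Hausdorff by Proposition \ref{prop:sc basic topology}.

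The first step is to verify that $\Phi_M$ is closed. Let $C\subseteq M$ be closed and let $y$ lie in the closure of $\Phi_M(C)$ in $M^{sc}$. Using local compactness together with the Hausdorff property, choose a compact neighborhood $N$ of $y$ with nonempty interior $U\ni y$. Properness gives that $\Phi_M^{-1}(N)$ is compact, hence the closed subset $C\cap\Phi_M^{-1}(N)$ is compact, and therefore its image $\Phi_M(C\cap\Phi_M^{-1}(N))$ is compact, hence closed in the Hausdorff space $M^{sc}$. One checks the inclusion $\Phi_M(C)\cap U\subseteq\Phi_M(C\cap\Phi_M^{-1}(N))$ (any point of the left side is the image of a point of $C$ that maps into $U\subseteq N$, hence lies in $\Phi_M^{-1}(N)$). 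Since $U$ is an open neighborhood of $y$ and $y\in\overline{\Phi_M(C)}$, we get $y\in\overline{\Phi_M(C)\cap U}\subseteq\overline{\Phi_M(C\cap\Phi_M^{-1}(N))}=\Phi_M(C\cap\Phi_M^{-1}(N))\subseteq\Phi_M(C)$. Thus $\Phi_M(C)$ is closed.

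The second step is purely formal. Continuity of $\Phi_M$ gives that $\Phi_M^{-1}(U)$ is open whenever $U\subseteq M^{sc}$ is open. Conversely, suppose $\Phi_M^{-1}(U)$ is open. Then $\Phi_M^{-1}(M^{sc}\setminus U)$ is closed, so by the previous step $\Phi_M\bigl(\Phi_M^{-1}(M^{sc}\setminus U)\bigr)$ is closed; by surjectivity this image equals $M^{sc}\setminus U$, so $U$ is open. Hence $U$ is open if and only if $\Phi_M^{-1}(U)$ is open, which together with surjectivity is exactly the definition of a quotient map recalled in the footnote.

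There is no real obstacle here: the argument is entirely point-set topology. The only point deserving care is that it relies on the precise form of the two cited statements — "proper" meaning preimages of compact sets are compact, and "locally compact Hausdorff" for the target — both of which are delivered verbatim by Propositions \ref{th:hmm contraction map properties} and \ref{prop:sc basic topology}, so no additional input about the structure of $M^{sc}$ or $\Phi_M$ is needed.
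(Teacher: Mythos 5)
Your proposal is correct and follows exactly the paper's route: the paper's proof likewise deduces that $\Phi_M$ is closed from properness of $\Phi_M$ and local compactness of $M^{sc}$, and then concludes from surjectivity that a continuous closed surjection is a quotient map. You have simply spelled out the standard point-set arguments that the paper leaves implicit.
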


\begin{proof}
	Since $\Phi_{M}$ is proper and $M^{sc}$ is locally compact, $\Phi_M$ is closed. It follows since $\Phi_M$ is surjective that it is a quotient map.
\end{proof}

We can use the definition of $\Phi_M$ to describe its fibres.  For $k\in K_{\mu(m)}'$ and $h\in K$ such that $hk\cdot m\in \Delta$, 
\begin{equation*}
	\begin{split}
		\Phi_M(k\cdot m) & = [hk\cdot m,(h,\mu(k\cdot m))] \\
		& \sim [(hk^{-1}h^{-1})hk\cdot m,(h,\mu(m))] \\
		&  = [h\cdot m,(h,\mu(m))] \\
		& = \Phi_M(m)
	\end{split}	
\end{equation*}
so $\Phi_M$ is constant on the leaves of the stratified null foliation of $M$. Conversely, if $\Phi_M(m) = \Phi_M(m')$ then $\exists g_1,g_2 \in K_{\sigma}'$, $t\in T$ such that 
$$\left(tg_1hm,(tg_2h,\mu(m))\right) = \left(h'm',(h',\mu(m')) \right).$$
This implies that
$$ \mu(m) = \mu(m'), \, tg_2h=h',\, \text{ and } h^{-1}g_2^{-1}g_1hm=m'$$
which implies that $m$ and $m'$ lie in the same leaf of the stratified null foliation of $M$. Thus we conclude by Proposition \ref{prop:2} that,

\begin{Proposition}\label{prop:fibres} The fibres of $\Phi_M$ coincide with the leaves of the stratified null foliation of $M$.
\end{Proposition}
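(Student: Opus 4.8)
The plan is to prove Proposition~\ref{prop:fibres} by establishing two inclusions: every leaf of the stratified null foliation of $M$ is contained in a fibre of $\Phi_M$, and every fibre of $\Phi_M$ is contained in a single leaf. Both directions amount to careful bookkeeping with the explicit formula for $\Phi_M$ from \eqref{right-contraction-formula} together with the description of leaves given by Proposition~\ref{prop:2}, namely that the leaf through $m \in W_{\sigma,(H)}$ is the orbit $K'_{\mu(m)}\cdot m$.

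For the first inclusion I would fix $m \in M$, write $\sigma$ for the stratum of $\Delta$ with $\mu(m) \in \Sigma_\sigma$ (equivalently, pick $h\in K$ with $h\mu(m)\in\sigma$), and take an arbitrary $k \in K'_{\mu(m)}$. The computation to carry out is exactly the displayed string of equalities in the paragraph preceding the proposition: using $K$-equivariance to move to a point whose moment image lies in $\Delta$, using that $hkh^{-1} \in K'_{h\mu(m)} = K'_\sigma$ acts trivially on the imploded point $[hk\cdot m]\in EM$ (since $EM$ collapses $K'_\sigma$-orbits over $\sigma$), and matching up the $T^*K$-coordinate modulo $K'_\sigma$. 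The upshot is $\Phi_M(k\cdot m) = \Phi_M(m)$, so $\Phi_M$ is constant on $K'_{\mu(m)}\cdot m$, which is the leaf by Proposition~\ref{prop:2}.

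For the converse I would suppose $\Phi_M(m) = \Phi_M(m')$ and unwind what this means in $(EM\times E_{\mathcal R}T^*K)\sslash_0 T$: writing $m = h^{-1}\cdot q$, $m' = (h')^{-1}\cdot q'$ with $q,q'\in\mu^{-1}(\Delta)$, the equality of the two classes says there exist $g_1,g_2\in K'_\sigma$ (acting trivially on the imploded factors over $\sigma$) and $t\in T$ implementing the identification, which forces $\mu(m)=\mu(m')$, a relation $tg_2h = h'$ among the group elements, and finally $h^{-1}g_2^{-1}g_1 h\cdot m = m'$ with $h^{-1}g_2^{-1}g_1 h \in h^{-1}K'_\sigma h = K'_{\mu(m)}$. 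Hence $m' \in K'_{\mu(m)}\cdot m$, which is the leaf through $m$ by Proposition~\ref{prop:2}. The one subtlety here is making precise, from the definition of symplectic implosion, that two representatives of a point of $EM$ over the stratum $\sigma$ differ exactly by $K'_\sigma$ (and likewise for $E_{\mathcal R}T^*K$); this is where I expect the main (though still routine) work to be, and it is essentially the content of the quotient description of $EM\times E_{\mathcal R}T^*K$ recalled at the start of this section together with the standard facts about imploded spaces from \cite{gsj}. Once both inclusions are in place, combining them with Proposition~\ref{prop:2} gives the statement.

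Assembling these pieces: Proposition~\ref{prop:2} identifies the leaves of the stratified null foliation with the orbits $K'_{\mu(m)}\cdot m$; the two computations above show these orbits are precisely the level sets of $\Phi_M$; therefore the fibres of $\Phi_M$ coincide with the leaves of the stratified null foliation, as claimed.
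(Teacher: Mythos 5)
Your proposal is correct and follows essentially the same route as the paper: the forward inclusion via the $K$-equivariance computation showing $hkh^{-1}\in K'_\sigma$ acts trivially on the imploded point, and the converse by unwinding the equality of classes in $(EM\times E_{\mathcal R}T^*K)\sslash_0 T$ to extract $m'=h^{-1}g_2^{-1}g_1h\cdot m$ with $h^{-1}g_2^{-1}g_1h\in K'_{\mu(m)}$, then invoking Proposition~\ref{prop:2}. No substantive differences.
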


Thus, by Corollary \ref{cor:quotient map}, there exists a homeomorphism $\psi$ such that the diagram 
\begin{equation}\label{eq:diagram}
		\xymatrix{
			 && M/_{\sim} \ar[dd]^{\psi}\\  
			M\ar[rru]^{\pi}\ar[drr]_{\Phi_M}&&   \\
			&& M^{sc}} 
	\end{equation}
commutes. Since $\pi$ and $\Phi_M$ are both $K$-equivariant, $\psi$ is also equivariant (in fact it is $K\times T$ equivariant). This homeomorphism preserves the decompositions of the spaces $M^{sc}$ and $M/_{\sim}$ into pieces indexed by $\sigma\subseteq \Delta$ and $H \leq K_{\sigma}$. Moreover, the restriction of $\psi$ to each piece respects its smooth and symplectic structures:

\begin{Proposition}\label{symplectomorphism}
	The restriction of $\psi$ to each symplectic piece $W_{\sigma,(H)}/_{\sim}$ is a symplectomorphism onto its image (the corresponding symplectic piece in $M^{sc}$).
\end{Proposition}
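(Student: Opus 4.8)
The plan is to prove the two halves of the statement separately: that $\psi$ restricts to a diffeomorphism on each symplectic piece, and that this diffeomorphism intertwines the symplectic forms. Throughout, write $\omega^{sc}$ for the symplectic form on the piece $(Q_{\sigma,(H)}/K_\sigma')\times_T K/K_\sigma'$ of $M^{sc}$ — that is, the Marsden--Weinstein reduction by $T$ of the form $\widetilde\omega+\widetilde\Omega$ on the stratum $(Q_{\sigma,(H)}/K_\sigma')\times K/K_\sigma'\times(-\sigma)$ of $EM\times E_{\mathcal{R}}T^*K$.

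The diffeomorphism claim is essentially a matter of comparing the explicit models of the two pieces. By \eqref{diffeomorphism}, $W_{\sigma,(H)}/_{\sim}$ is diffeomorphic to $(K/K_\sigma')\times_{Z_\sigma}(Q_{\sigma,(H)}/K_\sigma')$, and by the discussion in Section \ref{s:3} the corresponding piece of $M^{sc}$ is $(Q_{\sigma,(H)}/K_\sigma')\times_T K/K_\sigma'$; under these identifications, the commutativity of \eqref{eq:diagram} together with the formula \eqref{right-contraction-formula} for $\Phi_M$ shows that $\psi$ restricts to the canonical ``swap'' map $[kK_\sigma',[q]]\mapsto[[q],kK_\sigma']$, which is manifestly a diffeomorphism. (Equivalently, one can observe that $\pi$ restricts to a surjective submersion of $W_{\sigma,(H)}$ onto the smooth manifold $W_{\sigma,(H)}/_{\sim}$ and that $\Phi_M|_{W_{\sigma,(H)}}=\psi\circ\pi|_{W_{\sigma,(H)}}$ is smooth into the corresponding piece of $M^{sc}$ by \eqref{right-contraction-formula}, so $\psi$ restricted to the piece is smooth; the same argument applied with the reduction map gives smoothness of the inverse.)

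For the symplectic claim, since $\pi$ restricts to a surjective submersion $W_{\sigma,(H)}\to W_{\sigma,(H)}/_{\sim}$ with $\pi^*\widetilde\omega=\omega|_{W_{\sigma,(H)}}$ and since $\psi\circ\pi=\Phi_M$, the desired identity $\psi^*\omega^{sc}=\widetilde\omega$ is equivalent to $\Phi_M^*\omega^{sc}=\omega|_{W_{\sigma,(H)}}$ as $2$-forms on $W_{\sigma,(H)}$. Both of these $2$-forms are $K$-invariant — $\omega$ is $K$-invariant, $\Phi_M$ is $K$-equivariant, and the $K$-action on the piece of $M^{sc}$, being a symplectic reduction of a Hamiltonian action, is Hamiltonian with moment map $\nu$ satisfying $\nu\circ\Phi_M=\mu$ (as one reads off from \eqref{right-contraction-formula}). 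Since $W_{\sigma,(H)}=K\cdot Q_{\sigma,(H)}$, a $K$-invariant $2$-form on $W_{\sigma,(H)}$ is determined by its values at points $q\in Q_{\sigma,(H)}$; there $T_qW_{\sigma,(H)}=T_qQ_{\sigma,(H)}+\mathfrak k\cdot q$, so it suffices to compare the two forms on pairs of vectors of these two types. For a pair involving a fundamental vector $\underline Y\cdot q$ ($Y\in\mathfrak k$), equivariance of $\Phi_M$ together with the moment-map relations $\omega^{sc}(\underline Y\cdot\,,\cdot)=\langle d\nu(\cdot),Y\rangle$, $\omega(\underline Y\cdot\,,\cdot)=\langle d\mu(\cdot),Y\rangle$ and $\nu\circ\Phi_M=\mu$ reduce both sides to $\langle d\mu_q(\cdot),Y\rangle$. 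For a pair of vectors in $T_qQ_{\sigma,(H)}$, formula \eqref{right-contraction-formula} shows that $\Phi_M|_{Q_{\sigma,(H)}}$ sends $q$ to the class of $([q],\,eK_\sigma',\,-\mu(q))$, where $[q]$ is the image of $q$ under the reduction map $Q_{\sigma,(H)}\to Q_{\sigma,(H)}/K_\sigma'\subseteq EM$; the pullback of $\widetilde\omega$ along this reduction map is $\omega|_{Q_{\sigma,(H)}}$ by the defining property of symplectic implosion, while the contribution of the $\widetilde\Omega$-summand and of the residual $T$-reduction vanishes because $\widetilde\Omega$ restricts to zero on the $(-\sigma)$-directions of the relevant stratum of $E_{\mathcal{R}}T^*K$ (as described in \cite{gsj}). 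This yields $\Phi_M^*\omega^{sc}=\omega|_{W_{\sigma,(H)}}$, hence $\psi^*\omega^{sc}=\widetilde\omega$.

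The main obstacle is the symplectic bookkeeping in the last step: one must reconcile $\widetilde\omega$, obtained from $\omega$ via the symplectic cross-section theorem and the null-foliation quotient, with $\omega^{sc}$, obtained by imploding $M$ and $T^*K$ separately and then reducing the product by $T$, and in particular must check that the cotangent/$(-\sigma)$-directions and the cross-terms of the $T$-reduction contribute nothing on the tangent subspaces that occur. This requires the explicit description of the symplectic strata of symplectic implosion and of the associated reduction maps from \cite{gsj}; in effect the argument re-runs, stratum by stratum, the computation by which HMM prove that $\Phi_M$ is a symplectomorphism on the principal stratum (Proposition \ref{prop:hmm contraction map symplectic}), of which the present statement is the natural generalization.
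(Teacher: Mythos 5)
Your proof is correct and follows the same skeleton as the paper's: identify the restriction of $\psi$ with the explicit composition coming from \eqref{diffeomorphism} and \eqref{right-contraction-formula}, reduce the symplectic claim to the identity $\Phi_M^*\omega^{sc}=\omega|_{W_{\sigma,(H)}}$, and use $K$-invariance to check this only at points $q\in Q_{\sigma,(H)}$ on tangent vectors of the form $X+\underline{Y}$. Where you diverge is in how that bilinear identity is verified. The paper computes all terms in one pass from the explicit formula for the canonical form $\Omega$ on $T^*K\cong K\times\mathfrak{k}^*$, then converts the resulting pairings back into $\omega_q(X,\underline{Y'})$, $\omega_q(\underline{Y},\underline{Y'})$, etc.\ via Hamilton's equation and equivariance of $\mu$. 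You instead dispatch every pairing involving a fundamental vector abstractly, using $d\Phi_M(\underline{Y})=\underline{Y}$ and the moment-map relation $\nu\circ\Phi_M=\mu$, and handle the remaining pairings of vectors in $T_qQ_{\sigma,(H)}$ via the defining property of $\widetilde\omega$ together with the isotropy of the $(-\sigma)$-directions; this is a legitimate reorganization that avoids writing out $\Omega$ explicitly, and by bilinearity the two cases do cover all pairs even though the decomposition $X+\underline{Y}$ is not unique. The one thing you should make explicit is the provenance of $\nu$: the Hamiltonian structure of the residual $K$-action on the symplectic pieces of $M^{sc}$ must come from \cite{gsj}'s description of the strata of $E_{\mathcal{R}}T^*K$ together with reduction in stages (it cannot be "read off" from \eqref{right-contraction-formula} alone, and the corresponding statement for $M/_{\sim}$ in Section \ref{s:2} is itself deduced from this proposition, so quoting that would be circular). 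With that citation in place the argument is complete.
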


\begin{proof}
	We have already seen that, considering $W_{\sigma,(H)}/_{\sim}$ with its quotient smooth structure, we have diffeomorphisms
	$$W_{\sigma,(H)}/_{\sim} \cong K\times_{K_{\sigma}}(Q_{\sigma,(H)}/_{K_{\sigma}'}) \cong (Q_{\sigma,(H)}/_{K_{\sigma}'})\times K/K_{\sigma} \times (-\sigma)\sslash_0 T$$
	given by the maps
	$$\pi(k\cdot q) \mapsto [k,q] \mapsto [q,(k,-\mu(q))]$$
	Since $\Phi_M(k\cdot q) = [q,(k,-\mu(q))]$ (cf. equation \eqref{right-contraction-formula}), we see that this composition of maps equals the restriction of $\psi$ to $W_{\sigma,(H)}/_{\sim}$.
	
	To see that the restriction of $\psi$ to $W_{\sigma,(H)}/_{\sim}$ is a symplectomorphism, it is sufficient to show that $$\pi^*\psi^*(\widetilde\omega|_{Q_{\sigma,(H)}}+\widetilde\Omega|_{K\times \mathfrak{S}_{\sigma}}) = \Phi_M^*(\widetilde\omega_{Q_{\sigma,(H)}}+\widetilde\Omega|_{K\times \mathfrak{S}_{\sigma}}) = \omega|_{W_{\sigma,(H)}}$$ 
	at a point $q\in Q_{\sigma,(H)}$. Here $\widetilde\Omega|_{K\times \mathfrak{S}_{\sigma}}$ is the symplectic form on the symplectic quotient $K/K_{\sigma}'\times (-\sigma) = \left(K \times \mathfrak{S}_{-\sigma}\right)\sslash_0 K_{\sigma}'$ (cf. \cite[p. 162]{gsj}). The result then follows by $K$-equivariance of $\psi$.
	 
	An arbitrary element of $T_{q}W_{\sigma,(H)}$ can be written as $X + \underline{Y}$ where $X \in T_qQ_{\sigma,(H)}$ and $\underline{Y}$ is the image of $Y\in \mathfrak{k}$ at $q$ under the Lie algebra action.  One computes that 
	$$(d\Phi_M)_{q}(X + \underline{Y}) = (\pi_*(X),Y,-d\mu_q(X))$$
	where we write $Y$ to mean $Y+\mathfrak{k}_{\sigma}' \in \mathfrak{k}/\mathfrak{k}_{\sigma}'=T_e (K/K_{\sigma}')$. For $X,X'\in T_qQ_{\sigma,(H)}$ and $Y,Y' \in \mathfrak{k}$, we compute
	\begin{equation*}
		\begin{split}
			&\Phi_M^*\left( \widetilde\omega|_{Q_{\sigma,(H)}}+\widetilde\Omega|_{K\times \mathfrak{S}_{\sigma}}\right)_{q}\left(X+\underline{Y},X' + \underline{Y'}\right)\\ 
			& = \left(\widetilde\omega|_{Q_{\sigma,(H)}}+\widetilde\Omega|_{K\times \mathfrak{S}_{\sigma}}\right)_{([q],e,-\mu(q))}\left((\pi_*X,Y,-d\mu_q(X)),(\pi_*X',Y',-d\mu_q(X'))\right)\\
			& = (\widetilde\omega|_{Q_{\sigma,(H)}})_{[q]}(\pi_*X,\pi_*X') \\
			& \qquad + \Omega_{(e,-\mu(q))}\left((Y,-d\mu_q(X)),(Y',-d\mu_q(X'))\right)\\
			& = \omega_q(X,X') + \langle -d\mu_q(X),Y'\rangle - \langle -d\mu_q(X'),Y\rangle - \langle -\mu(q),[Y,Y']\rangle\\
			& = \omega_q(X,X') + \omega_q(X,\underline{Y'}) + \omega_q(\underline{Y},X') + \omega_q(\underline{Y},\underline{Y'})\\
			& = \omega_{q}(X+\underline{Y},X'+\underline{Y'})
		\end{split}
	\end{equation*}
	where in the penultimate equality we have used Hamilton's equation and the fact that $\mu$ is Poisson. Thus the restriction of $\psi$ to $W_{\sigma,(H)}/_{\sim}$ is a symplectomorphism.
\end{proof}

	The homeomorphism $\psi$, along with the algebra $C^{\infty}(M/_{\sim})$ defined in Section 2, shows that $M^{sc}$ is endowed with a naturally defined algebra of smooth functions equipped with a Poisson bracket. This was not evident from the algebraic definition of HMM via symplectic implosion. Indeed, the symplectic implosion of a Hamiltonian $K$-manifold $(M,\omega,\mu)$ is not naturally a symplectic stratified space in the sense of \cite{sl} (cf. the comment in \cite{gsj} on page 167).  We end this section with the following observation.

\begin{Theorem}
	The symplectic contraction of a Hamiltonian $K$-manifold $(M,\omega,\mu)$ is a stratified space in the sense of \cite{sl}; the decomposition of $M^{sc}$ into symplectic pieces satisfies the following conditions.
	\begin{enumerate}[i)]
		\item (locally finite) Every point in $M^{sc}$ has a neighbourhood which intersects finitely many of the symplectic strata.
		\item (frontier condition) If for two symplectic strata $X$ and $Y$, $X\cap \overline{Y} \neq \emptyset$, then $X \subseteq \overline{Y}$.
		\item (local normal triviality) Every point in $M^{sc}$ has a neighbourhood homeomorphic to a cone over a lower dimensional stratified space.
	\end{enumerate}
	Moreover, $M^{sc}$ is a symplectic stratified space in the sense of \cite{sl}; the smooth structure $C^{\infty}(M^{sc})$ defined above satisfies the following conditions.
	\begin{enumerate}[a)]
		\item The strata are symplectic manifolds.
		\item $C^{\infty}(M^{sc})$ is a Poisson algebra.
		\item The inclusions of the strata are smooth Poisson maps.
	\end{enumerate}

\end{Theorem}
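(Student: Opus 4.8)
The strategy is to transport every piece of structure across the stratification-preserving, $K\times T$-equivariant homeomorphism $\psi\colon M/_{\sim}\to M^{sc}$ of diagram \eqref{eq:diagram}, reducing the theorem to a statement about the decomposition $M/_{\sim}=\bigcup_{\sigma,(H)}W_{\sigma,(H)}/_{\sim}$ of Section \ref{s:2}. Once this is done, conditions (a)--(c) are immediate from results already proved: each $W_{\sigma,(H)}/_{\sim}$ is a smooth symplectic manifold, so (a) follows from Proposition \ref{symplectomorphism}, which moreover identifies the symplectic form that $\psi$ carries onto each stratum of $M^{sc}$ with the one coming from the implosion--product--reduction description; and since $C^{\infty}(M^{sc})$ is by definition the image of $C^{\infty}(M/_{\sim})$ under $(\psi^{-1})^{*}$ and $\psi$ is a symplectomorphism on strata, conditions (b) and (c) are a restatement of Proposition \ref{poisson-structure}. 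So all the real work is in the three topological conditions (i)--(iii) for the decomposition of $M/_{\sim}$.

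For (i)--(iii) I would argue locally over the Weyl chamber. Fix a face $\sigma\subseteq\Delta$. As recorded in Section \ref{s:2}, $\mu^{-1}(\sigma)=(p\circ\mu|_{M_\sigma})^{-1}(0)$ with null foliation given by the $K_\sigma'$-orbits, so
\[
\bigcup_{H\leq K_\sigma'}\bigl(Q_{\sigma,(H)}/K_\sigma'\bigr)\;=\;\mu^{-1}(\sigma)/K_\sigma'\;=\;M_\sigma\sslash_0 K_\sigma'
\]
is exactly the Sjamaar--Lerman symplectic reduction of the cross-section $M_\sigma$ by the compact group $K_\sigma'$; by \cite{sl} this is a symplectic stratified space whose orbit-type stratification is the one indexed by $(H)$. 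By part (2) of the symplectic cross-section theorem (Theorem \ref{th:symplectic slice theorem}) the set $KM_\sigma=K\mu^{-1}(\mathfrak{S}_\sigma)$ is open in $M$, and from the identifications \eqref{diffeomorphism} its image $\pi(KM_\sigma)\subseteq M/_{\sim}$ is homeomorphic to the associated bundle $K\times_{K_\sigma}(M_\sigma\sslash_0 K_\sigma')$ over the manifold $K/K_\sigma$, with the stratification induced stratum by stratum from the fibre. Since $\Delta$ has only finitely many faces and $\bigcup_\sigma KM_\sigma=M$, the open sets $\pi(KM_\sigma)$ form a finite open cover of $M/_{\sim}$ whose local stratifications agree on overlaps (both being the orbit-type refinement); conditions (i) and (iii), being local, therefore reduce to the corresponding statements for $K\times_{K_\sigma}(M_\sigma\sslash_0 K_\sigma')$, while the frontier condition (ii) follows by combining the frontier property of the orbit-type stratification of $M_\sigma\sslash_0 K_\sigma'$, the bundle form $W_{\sigma,(H)}\cong K\times_{K_\sigma}Q_{\sigma,(H)}$ of \eqref{definition-of-W}, and the closure relations among the faces of $\Delta$.

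The point where real work is needed -- and what I expect to be the main obstacle -- is verifying that forming the associated bundle $K\times_{K_\sigma}(-)$ and then gluing over the faces of $\Delta$ preserves condition (iii), local normal triviality. Over the interior of a fixed $KM_\sigma$ the bundle is locally trivial and one simply inherits the cone neighbourhoods of the Sjamaar--Lerman reduction, but near a point of $\overline{KM_\sigma}\setminus KM_\sigma$ one must produce a single cone neighbourhood that simultaneously sees the cone structure of the reduced slice representation of $K_\sigma'$ and the cone structure of the Weyl chamber $\Delta$ at $\sigma$. Concretely this is a local normal form computation: take the Marle--Guillemin--Sternberg model for the $K$-action near an orbit of $\mu^{-1}(\sigma)$, restrict to the cross-section, pass to the $K_\sigma'$-reduction, and check that the resulting local model is a cone with cone point the image of that orbit. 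By contrast, (i) reduces to finiteness of the face set of $\Delta$ together with local finiteness of orbit types for the proper $K_\sigma'$-action, and (ii) is bookkeeping with closures of orbit-type strata and of faces; both should be routine.

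I would also note the alternative, more black-box route: the imploded spaces $EM$ and $E_{\mathcal{R}}T^*K$ are stratified spaces with symplectic strata by \cite{gsj}, a finite product of these is again one, and $M^{sc}$ is their reduction at $0$ by the compact torus $T$ -- but establishing that zero-level reduction by $T$ preserves local normal triviality is itself the same local-model analysis, now run on the product of imploded spaces, so this route does not avoid the main difficulty.
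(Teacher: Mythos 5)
Your handling of conditions a)--c) coincides with the paper's: both transport Propositions \ref{poisson-structure} and \ref{symplectomorphism} across the homeomorphism $\psi$ of diagram \eqref{eq:diagram}. For the topological conditions i)--iii), however, you take a genuinely different route. The paper's entire argument is the one you dismiss at the end as the ``black-box route'': it invokes the fact (established in \cite{gsj}) that imploded spaces satisfy local finiteness, the frontier condition, and local normal triviality, and then asserts that these properties pass to $M^{sc} = \left(EM\times E_{\mathcal{R}}T^*K\right)\sslash_0 T$ because it is a reduction of a product of imploded spaces. You instead work intrinsically on $M/_{\sim}$, covering it by the open sets $\pi(KM_\sigma)\cong K\times_{K_\sigma}(M_\sigma\sslash_0 K_\sigma')$ and importing the Sjamaar--Lerman stratification of the reduced cross-sections fibrewise. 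What your approach buys is transparency: it makes visible exactly where each condition comes from (finiteness of faces of $\Delta$ for i), orbit-type closure relations plus face closure relations for ii), the MGS local model for iii)) and it correctly identifies that the only genuinely nontrivial point is producing a cone neighbourhood at a point where several faces of $\Delta$ meet -- a point the paper's one-line citation glosses over, since zero-level reduction by $T$ of a stratified space does not preserve local normal triviality for free. What the paper's approach buys is brevity and the ability to lean on \cite{gsj}, where the cone-neighbourhood analysis for imploded spaces has already been carried out. The one caveat on your version: you flag the local normal triviality computation as the main obstacle but do not execute it, so as written your proof of iii) is a program rather than a proof; to close it you would either have to carry out the MGS/cross-section/reduction normal form calculation you describe, or fall back on the implosion results of \cite{gsj} after all -- which is what the paper does.
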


\begin{proof} Conditions i)-iii) follow from the corresponding facts for symplectic imploded spaces \cite{gsj} by HMM's definition of $M^{sc}$ as a reduction of imploded spaces.  Conditions a)-c) follow by Propositions \ref{poisson-structure} and \ref{symplectomorphism}.
\end{proof}

\section{Fibers of Gelfand-Zeitlin systems}

In this section, we apply our geometric perspective to describe the fibers of Gelfand-Zeitlin systems, which -- as was observed in \cite{hmm} -- can be constructed via contraction. 

Given a Hamiltonian $K_n$-manifold $(M,\omega, \mu)$, and a chain of group homomorphisms, 
\begin{equation*}\label{chain}
	K_{1} \xrightarrow{\phi_2} K_{2} \xrightarrow{\phi_3} \cdots \xrightarrow{\phi_n} K_n,
\end{equation*}
where the $K_i$ are connected, compact Lie groups with maximal tori $T_i$, we obtain a chain of contraction maps
\begin{equation}\label{iterated}
M \xrightarrow{\Phi_n} M_n \xrightarrow{\Phi_{n-1}} M_{n-1} \xrightarrow{\Phi_{n-2}} \cdots \xrightarrow{\Phi_1} M_1 
\end{equation}
in the following way.  First, by performing symplectic contraction with respect to the $K_n$ action on $M$, we get a symplectic contraction map $\Phi_n$ from  $M$ to the Hamiltonian $K_n\times T_n$-space $M_n$.  $M_n$ stratifies into symplectic manifolds equipped with a Hamiltonian $K_{n-1}\times T_n$ action coming from the homomorphism $\phi_n\colon K_{n-1}\to K_n$. Second, we take the quotient of $M_n$ by simultaneously performing symplectic contraction of all the symplectic strata of $M_n$ with respect to the $K_{n-1}$ action (note that since $T_n$ is abelian, this is the same as the symplectic contraction with respect to the $K_{n-1}\times T_n$ action). This results in a continuous map $\Phi_{n-1}\colon M_n \to M_{n-1}$, and $M_{n-1}$ is equipped with a $K_{n-1} \times T_{n-1}\times T_n$ action whose restriction to the symplectic pieces are again Hamiltonian. Repeating this procedure, one arrives at the space $M_1$, equipped with a Hamiltonian action of $T_1 \times \cdots \times T_{n-1}\times T_n$, generated by a moment map $\widetilde \mu$ such that the following diagram
\begin{equation}\label{cd}
		\xymatrix{
			M\ar[rr]^{\Phi}\ar[dd]^{\mu} && M_1\ar[dd]^{\widetilde{\mu}}\\  
			&&   \\
			\mathfrak{k}^* \ar[rr]^{F}&& \mathfrak{t}_1^*\times \cdots \times \mathfrak{t}_n^*
		} 
\end{equation}
commutes, where $F$ is the Gelfand-Zeitlin system on $\mathfrak{k}^*$ constructed from the chain of groups \eqref{chain} as in \cite{gs} and $\Phi = \Phi_1 \circ \cdots \circ \Phi_n$.   The space $M_1$ is the branching contraction space considered by HMM in \cite{hmm}.

Following work by \cite{thimm} and others on collective integrable systems, Guillemin and Sternberg observed in \cite{gs1,gs3} that given a multiplicity free Hamiltonian $K$ manifold for $K= U(n)$ or $SO(n)$, the Gelfand-Zeitlin system constructed from a chain of subgroups  
\begin{equation}\label{integrable-chains}
	U(1) \leq \cdots \leq U(n) \text{ or } SO(2) \leq \cdots \leq SO(n)
\end{equation}
defines a completely integrable torus action on the open dense subset of $M$ where the Gelfand-Zeitlin functions are smooth. 

We now show that, in general, if this construction yields a completely integrable system on an open dense subset of $M$, then the action of $T_1\times \cdots \times T_n$ on each symplectic piece of $M_1$, is completely integrable.

\begin{Lemma}\label{lemma:torus action is multiplicity free}
	If $(M,\omega,\mu)$ is a multiplicity free Hamiltonian $K$ manifold with connected fibers, then the action of the maximal torus $T$ on each of the symplectic pieces $Q_{\sigma,(H)}/K_{\sigma}' \subseteq EM$ is completely integrable.
\end{Lemma}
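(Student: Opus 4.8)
The plan is to recognise the correct torus acting on a piece $N := Q_{\sigma,(H)}/K_\sigma'$ and to prove the stronger statement that \emph{every} fibre of its moment map is a single orbit. Although the lemma speaks of the maximal torus $T$, note that $T\subseteq K_\sigma$ — every point of the face $\sigma$ is fixed by the (trivial) coadjoint action of $T$ — so the $T$-action on $N$ factors through the surjection $T\twoheadrightarrow K_\sigma/K_\sigma' =: \overline T_\sigma$, which is a torus because $K_\sigma'$ is semisimple. The residual Hamiltonian action of $\overline T_\sigma$ on $(N,\widetilde\omega)$ has moment map $\overline\mu\colon N\to(\mathfrak k_\sigma/\mathfrak k_\sigma')^*$ given by $\overline\mu([q])=\mu(q)$, valued in the face $\sigma$. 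Since a $T$-invariant function on $N$ is the same thing as a $\overline T_\sigma$-invariant one, it suffices to show that $(N,\widetilde\omega,\overline\mu)$ is a multiplicity-free $\overline T_\sigma$-space; because torus orbits in a Hamiltonian space are automatically isotropic while multiplicity-free spaces have coisotropic generic orbits, this will force the generic $\overline T_\sigma$-orbit (equivalently the generic $T$-orbit) to be Lagrangian, i.e. the action to be completely integrable.

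The geometric point I would exploit is that for $\eta\in\sigma$ the coadjoint stabiliser $K_\eta$ is exactly $K_\sigma$ — this is how $K_\sigma$ is defined — so the symplectic reduction of $M$ at the coadjoint orbit through $\eta$ is $M\sslash_\eta K = \mu^{-1}(\eta)/K_\sigma$. Since $M$ is multiplicity-free, every such reduction is zero-dimensional: its reduced Poisson algebra is (via orbit reduction) a quotient of the commutative algebra $C^\infty(M)^K$ by Sjamaar--Lerman \cite{sl}, and a symplectic manifold with commutative function algebra is a point. Together with the connected-fibres hypothesis this makes $\mu^{-1}(\eta)/K_\sigma$ a single point, so for every $\eta$ in the image of $\overline\mu$ the fibre $\mu^{-1}(\eta)$ is a single $K_\sigma$-orbit $K_\sigma\cdot q$; this orbit meets $Q_{\sigma,(H)}$ and hence lies in it (orbit type is constant along orbits), so $\mu^{-1}(\eta)\cap Q_{\sigma,(H)}=K_\sigma\cdot q$, whose image in $N$ is precisely the $\overline T_\sigma$-orbit through $[q]$ (the preimage of $\overline T_\sigma\cdot[q]$ under $Q_{\sigma,(H)}\to N$ is $K_\sigma'\cdot(K_\sigma\cdot q)=K_\sigma\cdot q$). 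Thus every fibre of $\overline\mu$ is a single $\overline T_\sigma$-orbit; consequently every $\overline T_\sigma$-invariant function on $N$ has the form $F\circ\overline\mu$, and such functions Poisson-commute because the Lie--Poisson bracket on $(\mathfrak k_\sigma/\mathfrak k_\sigma')^*$ vanishes. Hence $C^\infty(N)^{\overline T_\sigma}=C^\infty(N)^T$ is commutative, $N$ is multiplicity-free for $\overline T_\sigma$, and the $T$-action on $N$ is completely integrable.

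I expect the one genuinely non-formal ingredient to be the claim that multiplicity-freeness of $M$ makes \emph{every} symplectic reduction zero-dimensional — not merely the generic one, but also those at the deepest walls, such as the $K_\sigma$-fixed values $\eta\in\sigma$ used above. The mechanism is that the Sjamaar--Lerman reduced bracket on each symplectic stratum is computed from the ambient Poisson bracket of $K$-invariant functions, so commutativity of $C^\infty(M)^K$ descends to every reduced space; turning this into a clean statement (together with the bookkeeping of orbit-type strata, and the observation that $\sigma$ is pointwise fixed by $K_\sigma$, so that the reduction at $\eta\in\sigma$ genuinely involves all of $K_\sigma$ and not a smaller stabiliser) is the step I would treat most carefully.
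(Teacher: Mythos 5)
Your proposal is correct, and it reaches the conclusion by a more hands-on route than the paper. Both arguments hinge on the same characterization of multiplicity-freeness (all reduced spaces $M\sslash_{\lambda}K$ are points, cited in the paper from Woodward), but where the paper then simply invokes the Guillemin--Jeffrey--Sjamaar isomorphism $M\sslash_{\lambda}K \cong EM\sslash_{\lambda}T$ \cite[Theorem 3.4]{gsj} and reads off that each piece $(Q_{\sigma,(H)}/K_{\sigma}')\sslash_{\lambda}T$ is a point, you reprove exactly the special case of that isomorphism you need: you identify $M\sslash_{\eta}K$ with $\mu^{-1}(\eta)/K_{\sigma}$ via the cross-section, use connectedness of fibers to get a single $K_{\sigma}$-orbit, and push it down to a single $\overline{T}_{\sigma}$-orbit in $Q_{\sigma,(H)}/K_{\sigma}'$. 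What your version buys is transparency (the moment fibers on each piece are exhibited explicitly as orbits, which is also the geometric content behind Theorem \ref{gelfand-zeitlin-fibers}); what the paper's version buys is brevity and uniformity across all strata at once. Two small points to tighten: the parenthetical ``orbit type is constant along orbits'' needs the observation that $K_{\sigma}=Z(K_{\sigma})^{0}K_{\sigma}'$, so that conjugation by any $g\in K_{\sigma}$ preserves $K_{\sigma}'$-conjugacy classes of isotropy subgroups of $K_{\sigma}'$ (orbit type here is taken for the $K_{\sigma}'$-action, so constancy along $K_{\sigma}$-orbits is not literally automatic); and ``a symplectic manifold with commutative function algebra is a point'' should read ``is zero-dimensional,'' which is where the connected-fibers hypothesis genuinely enters -- as you do in fact use it. Neither affects the validity of the argument.
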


\begin{proof}
	The action of $K$ on $M$ is multiplicity free iff the reduced spaces $M\sslash_{\lambda}K$ are points for all $\lambda$ (cf. \cite[Proposition A.1]{woodward}). By \cite[Theorem 3.4]{gsj}, for $\lambda\in \sigma\subseteq \Delta$, 
	$$M\sslash_{\lambda}K \cong EM\sslash_{\lambda} T = \left(\bigcup_{H\leq K_{\sigma}'} Q_{\sigma,(H)}/K_{\sigma}'\right)\sslash_{\lambda} T $$
	is therefore a point. It follows that the action of $T$ on each symplectic piece $Q_{\sigma,(H)}/K_{\sigma}'$ is multiplicity free, or, in other words, completely integrable.
\end{proof}

\begin{Proposition}\label{st-action}
	 Let $(M,\omega,\mu)$ be a multiplicity free Hamiltonian $K$ manifold with connected fibers, and let $M^{sc}$ be its symplectic contraction.  Suppose that $S \leq K$ is a connected Lie subgroup such that the action of $S$ on every $K$ coadjoint orbit is multiplicity free. Then, every symplectic stratum of $M^{sc}$ is a multiplicity free Hamiltonian $S\times T$ manifold.
\end{Proposition}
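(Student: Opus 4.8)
The plan is to compute all symplectic reductions of a symplectic stratum of $M^{sc}$ by $S\times T$ via reduction in stages, reducing by the contraction torus $T$ first: by Lemma \ref{lemma:torus action is multiplicity free} this collapses the ``fibre'' of the stratum and leaves behind a coadjoint orbit of $K$, at which point the hypothesis on $S$ applies directly. Write $W := W_{\sigma,(H)}/_{\sim}$ for a symplectic stratum of $M^{sc}$; by Section \ref{s:2} (transported through the homeomorphism $\psi$) it is a Hamiltonian $K\times T$-manifold, hence, restricting the $K$-action to $S$, a Hamiltonian $S\times T$-manifold. By \cite[Proposition A.1]{woodward}, $W$ is multiplicity free as an $S\times T$-manifold if and only if all of its symplectic reductions by $S\times T$ are points (or empty). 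Since the $K$-action -- and hence the $S$-action -- on $M^{sc}$ commutes with the $T$-action, reduction in stages (cf.\ \cite{sl}) gives
$$ W\sslash_{(\eta,\nu)}(S\times T)\ \cong\ \big(W\sslash_{\nu}T\big)\sslash_{\eta}S \qquad (\eta\in\mathfrak{s}^*,\ \nu\in\mathfrak{t}^*), $$
so it suffices to prove that $W\sslash_{\nu}T$ is multiplicity free as a Hamiltonian $S$-manifold for every $\nu$.

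First I would compute $W\sslash_{\nu}T$. Using the identification $W\cong K\times_{K_{\sigma}}\!\big(Q_{\sigma,(H)}/K_{\sigma}'\big)$ from \eqref{diffeomorphism} together with the formulas recalled in Section \ref{s:2} -- namely that the $T$-action is $t\ast(k,[q]) = (k,t\cdot[q])$ and its moment map $s\circ\widetilde\mu$ is $(k,[q])\mapsto\mu(q)\in\sigma$ -- the $\nu$-level set of the $T$-moment map is $K\times_{K_{\sigma}}\!\big(\{[q]:\mu(q)=\nu\}\big)$, where one uses equivariance of $\mu$ and the equality $K_{\sigma}=K_{\nu}$ on the stratum $\sigma$ to see that this fibre is $K_{\sigma}$-invariant. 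Since $T$ acts only on the fibre, the reduction is
$$ W\sslash_{\nu}T\ \cong\ K\times_{K_{\sigma}}\!\Big(\big(Q_{\sigma,(H)}/K_{\sigma}'\big)\sslash_{\nu}T\Big). $$
By Lemma \ref{lemma:torus action is multiplicity free} -- more precisely, by the computation in its proof, which shows each summand $Q_{\sigma,(H)}/K_{\sigma}'$ reduces to a point at every $\nu\in\sigma$ -- the fibre reduction $\big(Q_{\sigma,(H)}/K_{\sigma}'\big)\sslash_{\nu}T$ is a point whenever it is nonempty, so $W\sslash_{\nu}T\cong K/K_{\sigma}$. The residual $K$-action is by left translation and the residual moment map, descended from $\widetilde\mu$, is $kK_{\sigma}\mapsto k\nu$; since $K_{\sigma}=K_{\nu}$ this is a $K$-equivariant bijection onto the coadjoint orbit $K\cdot\nu$, hence a symplectomorphism onto it (with its standard symplectic form) by the usual classification of homogeneous Hamiltonian $K$-spaces (cf.\ \cite{gs}).

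To conclude: by hypothesis the $S$-action on the coadjoint orbit $K\cdot\nu$ is multiplicity free, so, transporting along the symplectomorphism just described, $W\sslash_{\nu}T$ is a multiplicity free Hamiltonian $S$-manifold and its $S$-reductions are points. By the displayed reduction-in-stages identity, every $S\times T$-reduction of $W$ is then a point, so $W$ is multiplicity free as an $S\times T$-manifold. Letting $\sigma$ and $(H)$ range over all their values exhausts the symplectic strata of $M^{sc}$, completing the proof.

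I expect the main obstacle to be the second step: carefully verifying that symplectic reduction of the twisted product $K\times_{K_{\sigma}}\!\big(Q_{\sigma,(H)}/K_{\sigma}'\big)$ by $T$ commutes with the associated-bundle construction and produces $K\cdot\nu$ with precisely its Kirillov--Kostant--Souriau form. This requires keeping track of the interacting group actions on the fibre $Q_{\sigma,(H)}/K_{\sigma}'$ -- in particular the fact, recalled in Section \ref{s:2}, that the residual $K_{\sigma}$-action there factors through the abelian quotient $K_{\sigma}/K_{\sigma}'$, which is exactly what makes the $T$-action on $W$ well defined -- together with the usual finite-stabilizer caveats and the normalization of the reduced symplectic form. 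By contrast, the reduction-in-stages step is routine, since $W$ is a genuine smooth symplectic manifold carrying an action of the compact, connected group $S\times T$.
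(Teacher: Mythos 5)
Your proposal is correct and follows essentially the same route as the paper: reduce in stages by $T$ first, use Lemma \ref{lemma:torus action is multiplicity free} to collapse the fibre $Q_{\sigma,(H)}/K_{\sigma}'$ to a point so that $W\sslash_{\nu}T\cong K/K_{\sigma}\cong K\cdot\nu$, and then invoke the multiplicity-freeness hypothesis on $S$ acting on coadjoint orbits. The extra care you take with the level set of the $T$-moment map and the identification of the residual moment map with the Kirillov--Kostant--Souriau structure on $K\cdot\nu$ is a useful elaboration of steps the paper passes over quickly, but it is not a different argument.
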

\begin{proof}
	Every symplectic piece of $M^{sc}$ is of the form $K\times_{K_{\sigma}}(Q_{\sigma,(H)}/K_{\sigma}')$.  We want to show that the symplectic reduced spaces 
	$$\left(K\times_{K_{\sigma}}(Q_{\sigma,(H)}/K_{\sigma}')\right)\sslash_{(\xi,\lambda)}S\times T$$
	are all points.  By reduction in stages \cite{sl}, this space is isomorphic to 
	\begin{equation}\label{stages}
	\left(K\times_{K_{\sigma}}(Q_{\sigma,(H)}/K_{\sigma}')\sslash_{\lambda} T\right)\sslash_{\xi}S
	\end{equation}
	By Lemma \ref{lemma:torus action is multiplicity free}, the symplectic reduction $(Q_{\sigma,(H)}/K_{\sigma}')\sslash_{\lambda}T$ is a point. It follows that $$K\times_{K_{\sigma}}(Q_{\sigma,(H)}/K_{\sigma}')\sslash_{\lambda} T = K\times_{K_{\sigma}}\{\ast\} \cong K/K_{\sigma}.$$
	The Hamiltonian action of $K$ on $K\times_{K_{\sigma}}(Q_{\sigma,(H)}/K_{\sigma}')$ commutes with the action of $T$, so it descends to the symplectic quotient.  By $K$-equivariance, and the line above, the moment map for the action of $K$ on $K/K_{\sigma}$ is a symplectomorphism onto the $K$ coadjoint orbit through $\lambda$, so 
	$$K\times_{K_{\sigma}}(Q_{\sigma,(H)}/K_{\sigma}')\sslash_{\lambda} T \cong  K\cdot \lambda.$$
	By our assumption that all $K$ coadjoint orbits are multiplicity free Hamiltonian $S$ manifolds, we conclude that the space 
	$$\left(K\times_{K_{\sigma}}(Q_{\sigma,(H)}/K_{\sigma}')\sslash_{\lambda} T\right)\sslash_{\xi}S = \{\ast\},$$
	thus $K\times_{K_{\sigma}}(Q_{\sigma,(H)}/_{\sim})$ is a multiplicity free $S\times T$ manifold.
\end{proof}

We require the following fact (cf. \cite{gs3}).

\begin{Lemma}\label{mult-free-orbits}
	Every $U(n)$ coadjoint orbit is a multiplicity free $U(n-1)$ manifold for any embedding of $U(n-1)$ as a subgroup of $U(n)$. Respectively, every $SO(n)$ coadjoint orbit is a multiplicity free $SO(n-1)$ manifold for any embedding of $SO(n-1)$ as a subgroup.
\end{Lemma}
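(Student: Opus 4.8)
The plan is to prove Lemma \ref{mult-free-orbits} by the classical branching-rule characterization of multiplicity free actions, which reduces the statement to a purely representation-theoretic fact. Recall that a compact Hamiltonian $G$-manifold is multiplicity free if and only if its ``quantization'' — whenever one exists, e.g. for integral coadjoint orbits via Borel--Weil — decomposes with all multiplicities equal to one; for coadjoint orbits of a subgroup $H\leq G$ this is exactly the condition that the restriction to $H$ of every irreducible representation of $G$ decompose with multiplicities $\leq 1$. So the first step is to invoke this equivalence (citing \cite{gs3}, or \cite{woodward} Proposition A.1 together with the Borel--Weil correspondence) to translate the geometric claim into the classical branching statements for $U(n)\downarrow U(n-1)$ and $SO(n)\downarrow SO(n-1)$.

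Second, I would cite the Gelfand--Zeitlin branching rules themselves: every irreducible representation of $U(n)$ restricted to the standard $U(n-1)$ decomposes as a direct sum of distinct irreducibles indexed by interlacing patterns (multiplicity one), and likewise every irreducible representation of $SO(n)$ restricted to the standard $SO(n-1)$ has all multiplicities one. These are precisely the facts that make the Gelfand--Zeitlin basis a basis, and they are exactly what was used by Guillemin--Sternberg in \cite{gs1,gs3}; I would simply reference that literature rather than reprove the combinatorics.

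Third — and this is where a little care is needed, since the statement asks for \emph{any} embedding of $U(n-1)$ (resp.\ $SO(n-1)$), not just the standard block embedding — I would observe that all embeddings of $U(n-1)$ into $U(n)$ as a subgroup are conjugate in $U(n)$. (For $U(n-1)\hookrightarrow U(n)$ this follows since any such embedding, being a closed connected subgroup of the same dimension-minus type, can be conjugated to the standard one; concretely the embedding restricted to the center/maximal-torus data pins down the splitting $\CC^n = \CC^{n-1}\oplus \CC$ up to unitary change of basis. For $SO(n-1)\hookrightarrow SO(n)$ the analogous conjugacy holds as well.) Conjugate subgroups give isomorphic restriction functors, and conjugation by an element of $U(n)$ is an equivariant symplectomorphism of each coadjoint orbit intertwining the two subgroup actions; hence the multiplicity free property is independent of the choice of embedding, and the standard case suffices.

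The main obstacle is really just the bookkeeping in this last step: making precise the claim that embeddings are unique up to conjugacy and that this suffices — it is the only point where the word ``any'' in the statement has content. Everything else is a citation. I expect the write-up to be short: one sentence reducing to branching multiplicities via \cite{gs3,woodward}, one sentence quoting the Gelfand--Zeitlin branching rules, and one or two sentences handling the conjugacy of embeddings. No serious calculation is required.
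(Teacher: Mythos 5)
The paper offers no proof of this lemma at all --- it is stated as a known fact with the pointer ``(cf.\ \cite{gs3})'' --- so your first two steps (translate multiplicity-freeness of coadjoint orbits into branching multiplicities, then quote the multiplicity-one branching rules for $U(n)\downarrow U(n-1)$ and $SO(n)\downarrow SO(n-1)$) are exactly the content of that citation and are the right idea. One caveat even there: the quantization/Borel--Weil dictionary as you state it only covers \emph{integral} orbits, whereas the lemma is about every coadjoint orbit. The standard bridge is that multiplicity-freeness of $\mathcal{O}_\lambda$ under $H$ is equivalent to sphericity of the complexified orbit $G_{\CC}/P_\lambda$, which depends only on the face of the Weyl chamber containing $\lambda$ (and every face contains integral points); a sentence to this effect is needed.

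The genuine gap is your third step: it is simply not true that all embeddings of $U(n-1)$ into $U(n)$ are conjugate. An embedding is a faithful $n$-dimensional unitary representation of $U(n-1)$, and conjugacy of the images corresponds to isomorphism of these representations up to an automorphism of $U(n-1)$; because $U(n-1)$ has the nontrivial characters $\det^m$, there are non-conjugate examples such as $\{\mathrm{diag}(A,1)\}$ versus $\{\mathrm{diag}(A,\det A)\}$ (the standard representation restricts to $V\oplus \CC$ on the first and to $V\oplus\det$ on the second, which no automorphism can fix). Your parenthetical justification --- that the maximal-torus data pins down the splitting $\CC^n=\CC^{n-1}\oplus\CC$ up to unitary change of basis --- determines the invariant subspaces but not the character by which $U(n-1)$ acts on the line, which is precisely the ambiguity. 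For $n-1\ge 2$ the determinant-twisted embeddings do still give multiplicity-free branching (the twist sends the interlacing constituent $V_\mu\boxtimes\chi_{|\lambda|-|\mu|}$ to $V_\mu\otimes\det^{c}$ with $c$ determined by $|\mu|$, so distinct constituents stay distinct), but that is an argument you would have to supply, not a consequence of conjugacy. Worse, for $n=2$ the statement read literally fails: the embedding $z\mapsto zI$ of $U(1)$ as the center of $U(2)$ acts trivially on every coadjoint orbit, so a regular orbit $S^2\cong\mathcal{O}_\lambda$ is not multiplicity free for that subgroup. So ``any embedding'' cannot be disposed of by reducing to a single conjugacy class; either the lemma should be read as ``any conjugate of the standard block embedding'' (which is all the Gelfand--Zeitlin application uses, and where your conjugation-is-a-symplectomorphism remark does suffice), or the non-standard embeddings must be treated separately. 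The $SO$ case is the one place your conjugacy argument essentially works, since $SO(n-1)$ has no nontrivial characters and its only faithful $n$-dimensional orthogonal representation is the standard one plus a trivial summand.
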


With these results in hand, we can conclude the following: every symplectic piece of the iterated symplectic contraction $M_1$ corresponding to a Gelfand-Zeitlin system is a toric manifold.

\begin{Theorem}\label{pieces-are-toric}
	Suppose $(M,\omega,\mu)$ is a multiplicity free Hamiltonian $U(n)$ or $SO(n)$ manifold. Let $M_1$ be the space constructed from $M$ by iterated symplectic contraction as in \eqref{iterated}, using one of the chains \eqref{integrable-chains}.  Then the action of $T = T_1 \times \cdots \times T_n$ on every symplectic piece of $M_1$ is completely integrable.
\end{Theorem}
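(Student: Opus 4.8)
The plan is to prove the statement by downward induction along the chain \eqref{integrable-chains}, peeling off one contraction at a time and feeding the conclusion of Proposition \ref{st-action} back into its own hypotheses. Write $K_k$ for the $k$-th group of the chain (so $K_k=U(k)$, resp.\ $K_k=SO(k)$), let $T_k$ be its maximal torus, and put $A_k:=T_{k+1}\times\cdots\times T_n$, the torus already accumulated once $K_n,\dots,K_{k+1}$ have been contracted (so $A_n$ is trivial and $A_{k-1}=T_k\times A_k$). The inductive claim, with the convention $K_0=\{e\}$, is
\[
H(k):\quad\text{every symplectic piece of $M_k$ is a multiplicity free Hamiltonian $K_{k-1}\times A_{k-1}$ manifold with connected fibers.}
\]
Here $H(1)$ says that every symplectic piece of $M_1$ is a multiplicity free Hamiltonian $T_1\times\cdots\times T_n$ manifold, which --- the action being that of a torus --- is exactly the assertion that $T=T_1\times\cdots\times T_n$ acts on it in a completely integrable fashion (cf.\ the last sentence of the proof of Lemma \ref{lemma:torus action is multiplicity free}). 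So $H(1)$ is the theorem.

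I would first establish the base case $H(n)$: apply Proposition \ref{st-action} to $M$ with $K=K_n$ and $S=K_{n-1}$, the only nontrivial hypothesis --- that $K_{n-1}$ acts multiplicity freely on every $K_n$-coadjoint orbit --- being precisely Lemma \ref{mult-free-orbits}; the conclusion is that the symplectic pieces of $M_n$ are multiplicity free Hamiltonian $K_{n-1}\times T_n=K_{n-1}\times A_{n-1}$ manifolds. For the inductive step $H(k)\Rightarrow H(k-1)$ with $k\ge 2$, fix a symplectic piece $P$ of $M_k$; by $H(k)$ it is a multiplicity free Hamiltonian manifold with connected fibers for the group $\widehat K:=K_{k-1}\times A_{k-1}$, whose abelian factor $A_{k-1}$ commutes with $K_{k-1}$. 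On $P$ the map $\Phi_{k-1}$ is symplectic contraction with respect to $K_{k-1}$, and since $A_{k-1}$ is abelian this is the same operation as symplectic contraction with respect to the full group $\widehat K$ (the parenthetical point in the construction of \eqref{iterated}: imploding and reducing by an abelian factor has no effect, the residual $A_{k-1}$-action merely being carried along). Hence Proposition \ref{st-action} applies to $P$ with $K=\widehat K=K_{k-1}\times A_{k-1}$ and $S=K_{k-2}\times A_{k-1}$: a $\widehat K$-coadjoint orbit has the form $\mathcal{O}\times\{\text{pt}\}$ with $\mathcal{O}$ a $K_{k-1}$-coadjoint orbit, and $S$ acts on it through the $K_{k-2}$-action on $\mathcal{O}$, which is multiplicity free by Lemma \ref{mult-free-orbits}. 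Thus every symplectic piece of $M_{k-1}$ coming from $P$ is a multiplicity free Hamiltonian $S\times\widehat T$ manifold, where $\widehat T=T_{k-1}\times A_{k-1}$ is the maximal torus of $\widehat K$; the copy of $A_{k-1}$ inside $\widehat T$ is central in $\widehat K$, so its contraction action coincides with the residual $\widehat K$-action, hence with the copy of $A_{k-1}$ already sitting in $S$, and therefore this is the same data as a multiplicity free Hamiltonian $K_{k-2}\times T_{k-1}\times A_{k-1}=K_{k-2}\times A_{k-2}$ manifold --- that is, $H(k-1)$. The $SO(n)$ case runs identically, using the second half of Lemma \ref{mult-free-orbits}; at the bottom of either chain the group ($U(1)$, resp.\ $SO(2)$) is abelian, so the last contraction is trivial and $H(1)$ follows at once from $H(2)$.

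I do not expect any hard analysis or construction: the whole argument is the induction above, and the place where care is genuinely required is the bookkeeping that keeps Proposition \ref{st-action} applicable at each stage. One must verify, step by step, that ``contract the pieces of $M_k$ by $K_{k-1}$'' really agrees with ``contract them by $\widehat K=K_{k-1}\times A_{k-1}$'', so that Proposition \ref{st-action} --- which is phrased for contraction by a full group --- can be invoked, and one must reconcile the maximal torus $\widehat T$ of $\widehat K$ that the proposition attaches to the output with the torus $A_{k-1}$ already present; it is this reconciliation (the centrality observation above) that forces the precise form of $H(k)$. The one delicate input is that Proposition \ref{st-action} demands that its source have connected moment-map fibers, so connectedness of fibers must be propagated through the iteration (and assumed of $M$ at the start); one checks this either by using the explicit description of the symplectic pieces as $\widehat K\times_{\widehat K_{\sigma}}(Q_{\sigma,(H)}/\widehat K_{\sigma}')$ from Sections \ref{s:2}--\ref{s:3}, or by carrying connectedness of fibers as a standing hypothesis verified once at the outset. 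Everything else is unwinding definitions.
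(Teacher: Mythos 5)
Your proof is correct and follows essentially the same route as the paper: the paper also iterates Proposition \ref{st-action} together with Lemma \ref{mult-free-orbits} down the chain, invoking the same observation that contracting by $K_{k-1}\times A_{k-1}$ agrees with contracting by $K_{k-1}$ because the accumulated torus factor is abelian, and noting that the last step is trivial since $U(1)$ (resp.\ $SO(2)$) is its own maximal torus. Your explicit inductive bookkeeping and your flagging of the connected-fibers hypothesis of Proposition \ref{st-action} (which the paper's proof passes over silently) are, if anything, more careful than the original.
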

\begin{proof}
	We apply Proposition \ref{st-action} at each stage of the iterated symplectic contraction for the case of $K=U(n)$ (the proof for $K=SO(n)$ is identical). 
	
	First, by Proposition \ref{st-action} and Lemma \ref{mult-free-orbits} we have that the symplectic pieces of $M_n$ are multiplicity free Hamiltonian $U(n-1) \times T_n$ manifolds. 
	
	If we apply Proposition \ref{st-action} and Lemma \ref{mult-free-orbits} again, to the symplectic pieces of $M_n$, it follows that the symplectic pieces of $M_{n-1}$ are multiplicity free $U(n-2)\times T_{n-1}\times T_n$ manifolds (note that we perform symplectic contraction with respect to the action of $U(n-1) \times T_n$, the maximal torus of which is $T_{n-1}\times T_n$, the result is identical to performing symplectic contraction with respect to the $U(n-1)$ action, except that this way the extra $T_n$ action descends as part of the construction). 
	
	Repeating this process, we finally have that the symplectic pieces of $M_1$ are multiplicity free $T_1 \times T_2 \times \cdots \times T_n$ manifolds (note that since $U(1) = T_1$, the last symplectic contraction map $\Phi_1$ is trivial, so $M_2=M_1$). In other words, the torus action on each symplectic piece is completely integrable. 
\end{proof}

This result allows us to give a very general description of the fibers of Gelfand-Zeitlin systems, similar to that of \cite{cko}.

\begin{Theorem}\label{gelfand-zeitlin-fibers}
	Suppose $(M,\omega,\mu)$ is a connected Hamiltonian $U(n)$ or $SO(n)$ manifold with $\mu$ proper, equipped with a completely integrable Gelfand-Zeitlin system constructed as above. Then the fibers of the Gelfand-Zeitlin system are the total spaces of sequences of fiber bundles
	\begin{equation}\label{iterated-bundle}
		E_n\rightarrow E_{n-1} \rightarrow \cdots \rightarrow E_2 \rightarrow E_1 = L
	\end{equation}
	where $L$ is an isotropic torus contained in symplectic piece of $M_1$ and the each
	$$(K_{k-1})_\sigma'/H_{k-1} \rightarrow E_k \rightarrow E_{k-1}$$ 
	is a fiber bundle of homogeneous spaces, where $(K_{k-1})_\sigma'$ is the commutator of a Levi subgroup of $K_{k-1}$ and $H_{k-1}$ is an isotropy subgroup (as in Section \ref{s:2}). 
\end{Theorem}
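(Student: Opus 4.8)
The plan is to use the commuting square \eqref{cd} to realise a Gelfand--Zeitlin fibre as a $\Phi$-preimage of a subset of $M_1$, to cut that subset into isotropic tori by Theorem \ref{pieces-are-toric}, and then to unwind $\Phi = \Phi_1 \circ \cdots \circ \Phi_n$ one symplectic contraction at a time, reading off each stage as a fibre bundle of homogeneous spaces from the local model of Section \ref{s:2}.

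\emph{Reduction to a single torus.} Fix $c$ in the image, let $G := (F \circ \mu)^{-1}(c)$ be the corresponding Gelfand--Zeitlin fibre, and note that $G = \Phi^{-1}(\widetilde\mu^{-1}(c))$ by \eqref{cd}. On each symplectic stratum $P$ of $M_1$, Theorem \ref{pieces-are-toric} says $\widetilde\mu|_P$ is the moment map of a multiplicity free (completely integrable) torus action; hence $\widetilde\mu^{-1}(c) \cap P$, when non-empty, is a disjoint union of $T$-orbits, each of which is compact (as $T$ is compact) and isotropic in $P$ (as $\widetilde\mu$ is $T$-invariant). So $\widetilde\mu^{-1}(c)$ is a disjoint union of isotropic tori and $G$ is the corresponding disjoint union of the sets $\Phi^{-1}(L)$. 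Using that $\mu$ is proper (so $F \circ \mu$ is proper, $F$ itself being proper, its blocks being eigenvalue maps) together with the connectedness of Gelfand--Zeitlin fibres, I would reduce to describing a single $\Phi^{-1}(L)$ with $L$ a single isotropic torus lying in one symplectic piece of $M_1$; a Gelfand--Zeitlin fibre (or each connected component of it) is of this form.

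\emph{Peeling the tower.} Factor $\Phi = \Phi_1 \circ \cdots \circ \Phi_n$, where $\Phi_k$ is the symplectic contraction with respect to $K_k$ (and $\Phi_1$ is trivial since $K_1$ is a torus), set $E_1 := L$, and define $E_{k+1} := \Phi_k^{-1}(E_k)$ inductively, so that $G$ is the top stage. Two facts make this clean. First, by Proposition \ref{prop:fibres} and the homeomorphism $\psi$, the $\Phi_k$-preimage of a symplectic piece of its target is exactly the symplectic piece of its source carrying the same index $(\sigma,H)$; so starting from $L$ in a single stratum of $M_1$, every $E_{k+1}$ lies in a single stratum of its ambient space. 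Second, over that stratum the identification \eqref{diffeomorphism} presents $\Phi_k$ as the submersion $(g,q) \mapsto (g,[q])$ from $K_k \times_{(K_k)_\sigma} Q_{\sigma,(H)}$ onto $K_k \times_{(K_k)_\sigma}\bigl(Q_{\sigma,(H)}/(K_k)_\sigma'\bigr)$, whose fibre over $(g,[q])$ is the orbit $(K_k)_\sigma' \cdot q \cong (K_k)_\sigma'/\mathrm{Stab}_{(K_k)_\sigma'}(q)$; since $Q_{\sigma,(H)}$ is a single orbit type for $(K_k)_\sigma'$, all these stabilisers are conjugate to $H$, so this is a locally trivial fibre bundle with fibre the homogeneous space $(K_k)_\sigma'/H$. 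Restricting over $E_k$ and pulling back preserves the bundle structure, and composing these over $k = 2, \dots, n$ produces the tower of fibre bundles \eqref{iterated-bundle} over the isotropic torus $L = E_1$, with successive fibres the homogeneous spaces $(K_j)_\sigma'/H_j$ (commutators of Levi subgroups of the $K_j$ modulo isotropy subgroups).

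\emph{Main obstacle.} The delicate step is the reduction of the first paragraph: showing that one is over a \emph{single} isotropic torus $L$ sitting in a \emph{single} symplectic stratum of $M_1$. This rests on controlling $\widetilde\mu^{-1}(c)$ finely --- in particular on the connectedness of Gelfand--Zeitlin fibres under the standing hypotheses (which I would establish via properness of $\widetilde\mu$ and a convexity/connectedness argument, or cite) and on checking that distinct $T$-orbits in $\widetilde\mu^{-1}(c)$ cannot merge across the stratification of $M_1$. Once this is in hand, the two structural facts above make the tower \eqref{iterated-bundle} and the identification of its fibres immediate consequences of Sections \ref{s:2}--\ref{s:3}.
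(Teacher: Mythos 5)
Your proposal is correct and follows essentially the same route as the paper: express the Gelfand--Zeitlin fibre as $\Phi^{-1}(\widetilde\mu^{-1}(c))$ via the commuting square, use complete integrability on the strata of $M_1$ to see that the level set in each stratum is an isotropic torus, use connectedness of the fibres (which the paper gets from properness of $\mu$ by citing the connectedness theorem of \cite{lane} and pushing forward along the surjections $\Phi_k$) to confine the fibre to a single stratum, and then peel off the tower using the local model of each $\Phi_k$ as a bundle with fibre $(K_{k-1})_\sigma'/H_{k-1}$. The step you flag as the main obstacle is handled in the paper exactly as you anticipate, by citation plus the local-closedness argument for the strata.
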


Note: it follows immediately from the description of symplectic contraction as the quotient by a stratified null foliation (and the fact that $L$ is isotropic), that each $E_k$ is isotropic in $M_k$.
\begin{proof}
	If the Gelfand-Zeitlin construction yields an integrable system, then $(M,\omega,\mu)$ is a multiplicity free Hamiltonian manifold \cite{gs3}, so we are in the setting of Theorem \ref{pieces-are-toric}. If $\mu$ is proper, then $F\circ \mu$ is proper, so by \cite[Theorem 1]{lane} the fibers of $F\circ \mu$ are all connected.  Since the maps $\Phi_k$ are all surjective (they are quotient maps), and the diagram \eqref{cd} commutes, it follows that the fibers of $\widetilde \mu$ are connected.
	\begin{itemize}
	\item Since \eqref{cd} commutes, the fibers of the Gelfand-Zeitlin system $F\circ \mu$ equal the fibers of the composition $\widetilde{\mu}\circ\Phi$.
	\item Since the torus actions generated by $\widetilde{\mu}$ on the symplectic pieces of $M_1$ are completely integrable, the fibers of the restriction of $\widetilde{\mu}$ to any symplectic piece of $M_1$ are isotropic tori.
	\item Since the fibers of $\widetilde{\mu}$ are connected, the intersection of any fiber of $\widetilde{\mu}$ with a symplectic piece of $M_1$ is closed in the symplectic piece, and the symplectic pieces of $M_1$ are locally closed in $M_1$, each fiber of $\widetilde\mu$ is contained in a single symplectic piece of $M_1$. 
	\item At each stage of the iterated symplectic contraction, the pre-image under $\Phi_k$ of a submanifold $N$ of a symplectic piece of $M_{k-1}$ indexed by $\sigma\subseteq \Delta_{k-1}$ and $(H)$ is a fiber bundle over $N$ whose fibers are the homogeneous spaces $(K_{k-1})_\sigma '/H_{k-1}$. 
\end{itemize}
	
\end{proof}

\begin{Remark} Let $M=\mathcal{O}_{\lambda}$ be a $U(n)$ coadjoint orbit and consider the Hamiltonian action of $U(n)$ generated by the inclusion $\iota\colon \mathcal{O}_{\lambda} \to \mathfrak{u}(n)^*$. The fibers of Gelfand-Zeitlin systems on $M$ were studied extensively by Cho-Kim-Oh \cite{cko} who prove a more detailed result analogous to Theorem \ref{gelfand-zeitlin-fibers} (note: the iterated fiber bundle structure of the fibers described in \cite{cko} is similar but not identical to the one described here).  Cho-Kim-Oh show that -- in this specific case, $M=\mathcal{O}_{\lambda}$ -- the only fibers occurring in the bundles $E_k \to E_{k-1}$ of \eqref{iterated-bundle} are points or odd-dimensional spheres.  Moreover -- in this specific case -- they show that the fibers of the Gelfand-Zeitlin system are all isotropic.
	
\end{Remark}

\bibliographystyle{unsrt}
\bibliography{bibliography}

\end{document}